\documentclass[12pt]{article}
\usepackage[a4paper,margin=1in]{geometry}
\textheight = 692pt
\usepackage{amsthm}

\usepackage{amssymb}
\usepackage{amsmath}
\usepackage{eufrak}

\newtheorem{theorem}{Theorem}
\newtheorem{defn}[theorem]{Definition}

\newtheorem{corollary}[theorem]{Corollary}
\newtheorem{lemma}[theorem]{Lemma}

\newtheorem{definition}[theorem]{Definition}

\newtheorem{proposition}[theorem]{Proposition}
\newtheorem{notation}[theorem]{Notation}
\newtheorem{prop}[theorem]{Proposition}
\begin{document}
\def\F{{\mathbb F}}
\title{On constructing deformations of noncommutative algebras}
\author{ Agata Smoktunowicz}
\date{ }
\maketitle
\begin{abstract}
We show that if there is a flat deformation from a finite-dimensional algebra N to an
algebra $A$, then such a deformation can be obtained by a slight generalization of the
construction from \cite{DDS}. The generalization allows negative powers of $t$ to occur in the
formula for $f$. The proof uses a result from
\cite{JJAS}. We also prove that this modified construction produces well-defined
formal deformations that are flat deformations from $N$ to $A$.
 \end{abstract}

\section{Introduction}
Deformation theory provides a meeting ground for researchers from various areas.
 In the noncommutative setting, 
deformations play an important role in representation theory, algebraic geometry, 
and mathematical physics, where one is often interested not only in the existence of a 
deformation, but also in constructing it explicitly. For finite-dimensional algebras, explicit constructions make it possible to pass from abstract existence questions to 
concrete formulas, to compare algebraic and geometric constructions, and to test 
general ideas on examples.
 The aim of this paper is to prove the following result:

\begin{theorem} \label{main} Let $n$ be a natural number. Let $N, A$ be finite-dimensional unital associative $\mathbb C$-algebras of dimension $n$. 
 The following statements are equivalent:
\begin{itemize}
\item There is a flat deformation from $N$ to $A$  (in the sense of Definition \ref{Deformation}).
\item  There exists a unital ${\mathbb C}\{t\}$-algebra homomorphism  $f$ satisfying the assumptions of Subsection \ref{333} and such that $Im(f)/t\cdot Im(f)$ is isomorphic as a unital  $\mathbb C$-algebra to $N.$ 
\end{itemize}
\end{theorem}
One source of motivation for the present paper comes from questions of Michael 
Wemyss concerning contraction algebras. These are finite-dimensional 
noncommutative algebras associated to flops, introduced by Donovan and Wemyss 
\cite{DW}, and closely related to maximal modification algebras developed by Iyama and 
Wemyss \cite{IW}.
 From the geometric side,  Wemyss asked whether a given contraction 
algebra can deform to a specific semisimple algebra arising naturally from the 
geometry. Toda constructed deformations of this kind, but it is not known in general 
whether these geometric deformations are flat \cite{Toda}. This makes it natural to seek 
algebraic methods that produce flat deformations explicitly and that can be applied in 
concrete examples. 
 In the follow-up paper we will use the methods developed here to construct deformations of some contraction algebras of type $E$.

These questions are closely tied to Gopakumar--Vafa invariants \cite{BW, GV, DW, K1, K2, 10, phdw}.  
 It is known that  when a 
contraction algebra deforms flatly to a unique semisimple algebra, the sizes and multiplicities of the matrix blocks  of that semisimple algebra recover the corresponding GV 
invariants \cite{W}. GV invariants are connected to Higgs fields in physics \cite{1, 30, 40, 20}.
 Deformations of Lie algebras appear in  
mathematical physics \cite{jose1, jose2, SWbook}.  Deformations of noncommutative algebras appear in connection with symplectic reflection algebras in \cite{IG}.  Some other interesting applications, in the context of algebraic geometry, can be found in \cite{JJ5, JJ, JJ3, JJ4}.

   In Subsection \ref{333} we present a method of constructing flat deformations of algebras. It is a generalization of the method from \cite{DDS} which allows one to treat situations that are inaccessible 
to the original construction. The final section shows that if there exists a flat deformation from a finite-dimensional algebra $N$ to an 
algebra $A$, then  there exists a flat deformation from $N$ to $A$ obtained by this generalized method. 
 This answers Question 0.1 of  \cite{DDS}  in the affirmative.
 Recall that this question, which appears in the introduction of \cite{DDS}, is related to the method of constructing  flat deformations introduced in \cite{DDS} and $f$ is defined as in Notation 1.7 of  \cite{DDS}:

$ $

{\bf Question 0.1.} \cite{DDS} Suppose that there is an associative and flat deformation of polynomial type from
a finite-dimensional algebra $N$ to a semisimple finite-dimensional algebra A. Does it follow
that there is a deformation from $N$ to $A$ obtained by this method, or by a generalization of this
method obtained by changing the function $f$?

  Observe that our Theorem \ref{main} answers this question in the affirmative. 
 We will now describe the modification to the method from \cite{DDS}. We extend the 
deformation procedure proposed there in two directions. First, we allow negative 
powers of the deformation parameter $t$ to appear in the defining formulas. Second, we  work with power-series rings instead of polynomials, which allows us to remove some assumptions from \cite{DDS}. 
 One advantage of our construction is that, to construct flat deformations of noncommutative algebras, 
 it is
enough to know defining relations
and the dimension, provided one
verifies the hypotheses of the
method, for instance
 $f(s_{i}) \in t\cdot Im(f)$ and the required
dimension bound.
  Thus,  it is not necessary to construct Groebner bases or give elements which span this algebra as a linear space. 
 This is illustrated in the follow-up paper  for some contraction algebras $E_{4,n}$ that are Jacobi algebras generated by only two relations. 
  
 Our construction does not use 
Hochschild cohomology and derivations. Those tools are fundamental in deformation 
theory, but for the purpose of building explicit examples it is often useful to have a 
more elementary and computational method, especially in the finite-dimensional 
noncommutative setting.
 
The methods developed here are intended as concrete tools for 
constructing and analyzing examples, and for making the passage from a prescribed 
target algebra to an explicit deformation as transparent as possible. In particular, they allow us
 to replace the assumption that there is a flat deformation from an algebra $N$ to $A$ by
 the existence of a function $f$ satisfying the required properties. 
 Notice that in the context of modules, exact sequences were used to describe deformations and degenerations of modules \cite{Y, Zwara}  (in a way not related to our paper). 

 The paper is organized as follows. In Section $2$ we collect the background on 
formal deformations and the notation used throughout. Sections \ref{21}-\ref{Y} develop the first 
method in detail, prove that it yields flat deformations, and analyze the resulting specialized algebras.  The later sections  show, using \cite{JJAS}, that if there exists a flat deformation from a finite-dimensional ${\mathbb C}$-algebra $N$ to an 
algebra $A$, then  there exists a flat deformation from $N$ to $A$ obtained by this method.  The last section formulates a generalized method  
 which is easier to use in practice. Throughout, all algebras are associative and usually noncommutative.  Moreover,  all considered algebras are unital, and all algebra homomorphisms considered  are homomorphisms of unital algebras, unless stated otherwise. 
\section{Background information}
 Some of the  following background information is taken from \cite{Wemyss2}, since  we  use the same notation.
 We recall the definition of a formal deformation of $A$ from \cite{SWbook}.
 
\begin{defn}\label{SWbook} A formal deformation $(A\{t\}, \circ, +)$ of a $\mathbb C$-algebra $(A, \cdot, +)$ is an associative ${\mathbb C}\{t\}$-bilinear multiplication $\circ $ on the $\mathbb C\{t\}$-module $A\{t\}$ (where $A\{t\}$ is the set of power series with coefficients from $A$). Moreover, $(A\{t\}, \circ , +)/t(A\{t\}, \circ , +) \cong  (A, \cdot , +)$ as a
 $\mathbb C$-algebra.

Recall that $A\{t\}$ has a natural structure of a $\mathbb C\{t\}$-module given by 
\[t^{j}\sum_{i=0}^{\infty }a_{i}t^{i}= \sum_{i=0}^{\infty }a_{i}t^{i+j}.\]
\end{defn}

 $(A\{t\}, \circ, +)$ is a deformation of {\em polynomial type} of a $\mathbb C$-algebra $(A, \cdot, +)$ if for all $a,b\in A$, $a\circ b\in A[t]$, where $A[t]$ denotes the polynomial ring in the variable $t$ with coefficients in $A$.  

 Let $d_{1}, \ldots , d_{n}\in A$ be a basis of $A$ as a vector space over $\mathbb C$. Note that every element of $A\{t\}$ can be written in the form $\sum_{i=1}^{n}d_{i}f_{i}(t),$ for some $f_{i}\in \mathbb C\{t\}$. We then have 
\[d_{i}\circ d_{j}=\sum_{k=1}^{n}g_{i,j,k}(t)d_{k}.\] 
 
Then we have a well-defined multiplication on $A\{t\}$ given by
\[(\sum_{i=1}^{n}d_{i}f_{i}(t))\circ (\sum_{j=1}^{n}d_{j}f_{j}(t))=\sum_{i=1}^{n}\sum_{j=1}^{n}d_{i}\circ d_{j}f_{i}(t)f_{j}(t).\]

 This multiplication is associative
because the structure constants satisfy the
associativity relations. Since $ d_{1}, \ldots , d_{n}$ is a
basis, these relations determine an associative algebra structure on $A\{t\}$. If $(A\{t\}, \circ , +)$  has an identity element $1_{A}$, we can identify elements of $\mathbb C\{t\}$ with elements of the subring  ${\mathbb C}\{t\}\cdot 1_{A}\subseteq A\{t\}$, and we have $t^{i}\circ a=at^{i}=a\circ t^{i}$ for all $i$ and all $a\in A$. In particular $t$ is not a zero divisor in the algebra $(A\{t\}, \circ, +)$. 
 Similarly, if $0\neq f(t)\in \mathbb C\{t\}$, then $f(t) $ (which we identify with $f(t)\cdot 1_{A}$) is not a zero divisor of $A\{t\}$.

 We recall a definition of a flat deformation used in algebraic geometry:
\begin{definition}\label{Deformation} Let $N$ be a unital $\mathbb C$-algebra. A deformation of  $N$  over ${\mathbb C}\{t\}$ is a ${\mathbb C}\{t\}$-algebra  $\mathcal{N}$ such that $ \mathcal{N}$ is a free ${\mathbb C}\{t\}$-module and  the algebra $\mathcal{N} / t \mathcal{N}$ is isomorphic to $ N.$
 We assume that the image of the identity element of $N$ is the identity element in $\mathcal N$.
 A flat deformation of the algebra $N$ to a unital  algebra $A$ over ${\mathbb C}\{t\}$  is a deformation over ${\mathbb C}\{t\}$ as above  such that $ \mathcal{N} \otimes_{{\mathbb C}\{t\}} {\mathbb C}\{\{t\}\}$ is isomorphic  to $ A\{\{t\}\}=A\otimes _{{\mathbb C}} {\mathbb C}\{\{t\}\}$,  where ${\mathbb C}\{\{t\}\}$ denotes the algebraic closure of the fraction field of ${\mathbb C}\{t\}$ (so ${\mathbb C}\{\{t\}\}$ is an algebraically closed field), and $\otimes $ denotes tensor product over the indicated central subalgebra. 
\end{definition} 
 \subsection{Notation}
 All algebras considered in this paper have an identity element, so they are unital algebras, unless stated otherwise. Moreover,  all considered homomorphisms of algebras are homomorphisms of unital algebras, unless stated otherwise. 

\begin{notation}\label{4444}   By $\mathbb C\{t\}$ we denote the power-series ring in the  variable $t$ with coefficients in $\mathbb C$ (this notation is typical in noncommutative ring theory, and we don't assume that these power series are convergent;  note however that in some algebraic geometry papers the notation ${\mathbb C}[[t]]$ is used instead). By ${\mathbb C}[t]$ we denote the polynomial ring in the variable $t$ (so $t$ is a central element in these rings).

    We will denote by $\mathbb C\langle x_{1}, \ldots , x_{\rho }\rangle\{t\}$ the free unital  ${\mathbb C}\{t\}$-algebra on the free generators $ x_{1}, \ldots , x_{\rho }$.  So ${\mathbb C}\langle x_{1}, \ldots , x_{\rho}\rangle \{t\}$ (which can also be denoted as  $ {\mathbb C}\{t\}\langle x_{1}, \ldots , x_{\rho}\rangle $) is 
the ordinary free associative
algebra over ${\mathbb C}\{t\}$, consisting
of finite ${\mathbb C}\{t\}$-linear
combinations of words.

  By $\mathbb C\langle x_{1}, \ldots , x_{\rho } \rangle [t]$ we will  denote the polynomial ring in the  variable $t$ over the ring  $\mathbb C\langle x_{1}, \ldots , x_{\rho } \rangle $.
\end{notation}

\begin{notation}\label{not:monomials}
   Let $ \rho$ be a natural number.  Consider monomials of the ring $\mathbb C\langle x_{1}, \ldots , x_{ \rho } \rangle$. The monomials are words in the letters $x_{1}, \ldots , x_{\rho }$. We can order the monomials using shortlex ordering by specifying $x_{1}<\ldots <x_{\rho }$ and setting $1$ as the least monomial. Note that this means that if a monomial $p_i\in \mathbb C\langle x_{1}, \ldots , x_{\rho } \rangle$ is a product of $n_i$ elements from the set $\{x_{1}, \ldots , x_{\rho }\}$, then $p_i<p_j$ whenever $n_i<n_j$. If $n_i=n_j$, then  write $p_i=a_1a_2...a_{n_i}$ and $p_j=b_1b_2...b_{n_j}$, where $a_k, b_k\in\{x_{1}, \ldots , x_{\rho }\}$. Then $p_i<p_j$ if and only if $a_k<b_k$, where $k$ is the first index where $a_k$ and $b_k$ differ.  We denote the monomials of $\mathbb C\langle x_{1}, \ldots , x_{\rho } \rangle$ by $ p_1,p_2, \dots$
where $p_i<p_{j}$ for $i<j$. 
\end{notation}

\begin{notation}
   Let $R$ be a ring and let $I$ be an ideal of $R$. Then elements of the factor ring $R/I$ will be denoted as $r+I$, where $r\in R$.
 Notice that $r+I=s+I$ for $r,s\in R$ if and only if $r-s\in I$.
\end{notation}

\begin{definition}\label{1} Let $A$ be a finite-dimensional algebra over $\mathbb C$ of dimension $n$. 
 Let $A((t))$ denote the algebra of Laurent series over $A$ in the variable $t$, so
 elements of $A((t))$ are linear combinations of elements $at^{i}g(t)$ where 
$i$ is  an integer, $g(t)\in {\mathbb C}\{t\}$ and $a\in A$.

 We also define $A[t,t^{-1}]$ to be a subalgebra of $A((t))$ consisting of Laurent polynomials in the variable $t$, so elements of $A[t, t^{-1}]$ are   linear combinations of elements $at^{i}g(t)$ where 
$i$ is  an integer, $g(t)\in {\mathbb C}[t]$, and $a\in A$. 
\end{definition}

\section{ The method}\label{21}

\subsection { Describing the method}\label{333}

 We first describe the assumptions of our method. This method is obtained by slightly modifying the definition of the function $f$ in the method from \cite{DDS}. 

 All algebras considered in this section are associative and unital, and usually noncommutative.  Moreover,  all algebra homomorphisms considered in this section are homomorphisms of unital algebras. 
\begin{enumerate}
 \item  Let notation  be as in Definition \ref{1}.  Let $A$ be a finite-dimensional unital algebra over $\mathbb C$ of dimension $n$.  Let $\rho$ be a natural number and let 
 $f: {\mathbb C}\langle x_{1}, \ldots , x_{\rho}\rangle \{t\} \rightarrow A((t))$ be a   homomorphism of  ${\mathbb C}\{t\}$-algebras such that  for every $i $ \[f(x_{i})\in A[t,t^{-1}].\]  
  Notice that $f$ is a homomorphism of unital algebras, so \[f(1)=1_{A((t))}.\] 
 
\item  We assume that there are $c_{1}, \ldots , c_{n}\in {\mathbb C}\langle x_{1}, \ldots , x_{\rho }\rangle $ such that $f(c_{1}), \ldots , f(c_{n})$ are 
linearly independent over ${\mathbb C}\{t\}$.  
\item We also assume that there is a natural number $\gamma $ such that
 \[\operatorname {Im} (f)\subseteq t^{-\gamma}A\{t\}.\] 
\end{enumerate}

{\em If the above properties $1-3$ are satisfied, then there is a flat deformation from the ${\mathbb C}$-algebra $\operatorname {Im} (f)/t\cdot \operatorname {Im} (f)$ to $A$ (see Corollary \ref{pppp})}.

$ $
 
We will now describe how the method works. The general idea is similar to the one in \cite{DDS}. 

$ $

\begin{theorem} \label{9}
 Assume that $f$ satisfies the hypotheses of Subsection \ref{333}.
Then there exist  $q_{1}, \ldots ,  q_{n } \in  {\mathbb C}\langle x_{1}, \cdots , x_{\rho }\rangle [t]$ such that \[\operatorname {Im} (f)= \sum_{i=1}^{n}f(q_{i}){\mathbb C}\{t\},\] 
and the elements $f(q_{i})$ are linearly independent over ${\mathbb C}\{t\}$. Moreover, for all $1\leq k, m\leq n$ 
one has 
   \[f( q_{k})f(q_{m})=\sum_{i=1}^{n}(\zeta _{i,k,m}+t\xi_{i,k,m}(t))f(q_{i}), \]
 with $\zeta _{i,k,m}\in \mathbb C$, $\xi _{i,k,m}(t)\in {\mathbb C}\{t\}$.  These constants define a finite-dimensional
algebra $N$ with basis $d_{1}, \ldots , d_{n}$ by 
\[d_{k} d_{m}= \sum_{i=1}^{n}\zeta _{i,k,m}d_{i},\]
 and a formal deformation $\mathcal N$ over ${\mathbb C}\{t\}$ by 
\[d_{k}\ast _{t} d_{m}= \sum_{i=1}^{n}(\zeta _{i,k,m}+t\xi_{i,k,m}(t))d_{i}.\]
Then $N\cong \operatorname {Im} (f)/t \cdot \operatorname {Im} (f)$ and the ${\mathbb C}\{t\}$-algebra $\mathcal N\cong \operatorname {Im} (f)$ is a flat deformation from $N$ to $A$.
\end{theorem}
\begin{proof} The existence and spanning properties of $q_{i}$ follow from Proposition \ref{basisnew}; the multiplication formula follows from Proposition \ref{prop:relnew}; the construction of the formal deformation follows from Theorem  \ref{thm formal deformationnew}; the identification of the special fibre is Proposition \ref{imp}; and the flatness follows from Theorem   \ref{thm:iso1new}. $\mathcal N\cong \operatorname {Im} (f)$ is Proposition \ref{Im}.
  $ N\cong \operatorname {Im} (f)/t\cdot \operatorname {Im} (f)$ is Proposition \ref{imp}.
\end{proof}

\subsection{Examples} \label{examples}
 The following examples are written in the notation of the paper. In each case, we choose
a finite-dimensional target algebra $A$
over $\mathbb C$, define a homomorphism
\[ f : {\mathbb C}\langle x_{1}, \ldots , x_{\rho }\rangle \{t\} \rightarrow A((t)),\] 
and compute the special fibre
\[N \cong  \operatorname {Im} (f)/t\cdot \operatorname {Im} (f).\]

Let $n$ be the dimension of $A$. 
We  find $n$ elements $m_{1}, \ldots , m_{n}\in {\mathbb C}\langle x_{1}, \ldots , x_{\rho }\rangle $ such that $f(m_{1}), \ldots , f(m_{n})$ are linearly independent over ${\mathbb C}\{t\}$. Thus, by the main construction of the paper, the algebra $N$ 
flatly deforms to $A$.

The examples are deliberately elementary. They are meant to illustrate Theorem \ref{9}. 
In the example below, we calculate the concrete deformation and elements $q_{1}, \ldots , q_{n}$.

 {\bf Example } (The dual numbers deforming to ${\mathbb C}\oplus {\mathbb C}$). Let 
 \[A={\mathbb C}e_{1}\oplus {\mathbb C}e_{2}, e_{i}^{2}=e_{i}, e_{1}e_{2}=0, 1_{A}=e_{1}+e_{2}.\] 
 Define 
\[ f : {\mathbb C}\langle x\rangle \{t\} \rightarrow A((t)), f(x) = t\cdot e_{2}.\]
 Then
\[ \operatorname {Im} (f) = {\mathbb C}\{t\} \cdot  1_{A((t))} + {\mathbb C}\{t\} \cdot  y, y:= f(x) = t\cdot e_{2},\]
and
\[y^{2} = t^{2}\cdot e_{2} = t\cdot y.\]
 Set $q_{1}=1, q_{2}=x$. Then $f(1)=1_{A((t))}$, $f(x)=y$.  
Hence the corresponding deformation algebra has basis $1_{A},  y$ over ${\mathbb C}\{t\}$ and relation
\[ y^{2} = t\cdot y.\]
Modulo $t$, this becomes
\[y^{2} = 0.\]
 Therefore 
\[\operatorname {Im} (f)/t\cdot \operatorname {Im} (f)\cong {\mathbb C}[e]/\langle e^{2}\rangle .\]
 Note that $A={\mathbb C}\oplus {\mathbb C}$ has dimension $2$, and the elements $f(1), f(x)$ are linearly independent over ${\mathbb C}\{t\}$. 
This gives a flat deformation from $N= {\mathbb C}[e]/\langle e^{2}\rangle $ to $A= {\mathbb C}\oplus {\mathbb C}$.

The next example is a compact noncommutative example. 
  This example shows  why negative powers of $t$ are useful. 
 In this example, we will not  compute elements $q_{i}$, because this is not necessary for applying 
our method. We will only show that a flat deformation from $N$ to $A$ exists without giving a concrete deformation.  

$ $

{\bf Example.} 
Let $A=M_2({\mathbb C})$ and denote by $E_{ij}$ the usual matrix units.  Define
\[
        f:{\mathbb C}\langle x,y\rangle\{t\}\longrightarrow M_2({\mathbb C})((t))
\]
by
\[
        f(x)=t^{2}E_{12},
        \qquad
        f(y)=t^{-1}E_{21}.
\]
  Observe that $\operatorname {Im} (f)$ has four linearly independent elements over ${\mathbb C}\{t\}$:
\[ f(x), f(y), f(xy), f(1),\] since $f(1)=E_{11}+E_{22}$ is the identity matrix and $f(xy)=tE_{11}$.
 This example uses a negative power of $t$, so it is not literally covered by the original
polynomial version of the construction. It is, however, covered by the modified Laurent
version used in the paper, because
\[ \operatorname {Im} (f) \subseteq  t^{-1}M_{2}({\mathbb C})\{t\}.\]

We calculate the algebra $\operatorname {Im} (f)/t\cdot \operatorname {Im} (f)$. 
 We use Lemma \ref{condition2} (proved below)  applied with  $\rho =2$ (since we have two generators 
$x$ and $y$). Recall that $n$ is the dimension of $A$ and in our case  $A=M_{2}({\mathbb C})$ has dimension $4$, so $n=4$. 
 Let $s_{1}=x^{2}, s_{2}=y^{2}, s_{3}=xy+yx$. Then \[f(s_{1})=0, f(s_{2})=0, f(s_{3})=tI=t\cdot f(1),\] so \[f(s_{1}), f(s_{2}), f(s_{3})\in t\cdot \operatorname {Im} (f).\] Let $S$ be the ideal generated by $s_{1}, s_{2}, s_{3}$. Then 
 $ {\mathbb C}\langle x, y\rangle /S$ has dimension not exceeding $4$.
By Lemma  \ref{condition2}, 
 $ {\mathbb C}\langle x, y\rangle /S$ is isomorphic to $N$.
 Hence our method gives  a flat deformation from 
 $ {\mathbb C}\langle x, y\rangle /S$ to $M_{2}({\mathbb C})$.

\section{ Elements $q_{1},\ldots , q_{n}$}\label{A}

 In this section we generalize some results from \cite{DDS}. 
 In this section  we denote \[M=A, \tilde A=A[t,t^{-1}], A'=A((t)).\]
 
 We use the notation $M, {\tilde A}, A'$,  rather than $A, A[t, t^{-1}], A((t))$, simply because in Section \ref{generalisations} we will present a generalization of the method  from Subsection \ref{333} which uses a different $M, {\tilde A}$ and $A'$.

$ $

\begin{prop}\label{basisnew}  
    Let notation and assumptions be as in Subsection \ref{333}.  Assume that $\dim _{\mathbb C} M =\dim_{\mathbb C}A= n$. Then the following holds:
\begin{enumerate}
\item  There exist elements 
$q_{1},  \dots , q_{n}\in \mathbb C\langle x_{1}, \ldots , x_{\rho } \rangle [t]$ such that for every  
$r\in \mathbb C\langle x_{1}, \ldots , x_{\rho}\rangle \{t\}$ we have 
      \[  f(r)\in \sum_{i=1}^{n}\mathbb C\{t\} f(q_{i}).\] 
\item  There exists a natural number $m$ such that $rt^{m}\in \operatorname {Im} (f)$ for every $r\in M$.
\item There is $1\leq j\leq n$ such that $q_{j}=1$, where $1$ is the identity element of $\mathbb C\langle x_{1}, \ldots , x_{\rho}\rangle \{t\}$. 
\end{enumerate}
 
\end{prop}
\begin{proof} Let $k_1$ be the smallest possible integer such that for some nonzero $e_{1}\in M$ we have
\[e_{1}t^{k_1}+\sum_{i=k_{1}+1}^{\infty }\alpha_{i,1}\cdot t^{i} \in \operatorname {Im} (f)\]
for some $\alpha_{i,1}\in M$. Let $k_2$ be the smallest possible integer such that for some  $e_{2}\in M$, $e_{2}\notin \mathbb C \cdot e_{1}$ we have 
\[e_{2}t^{k_{2}}+\sum_{i=k_2+1}^{\infty }\alpha _{i,2} \cdot t^{i} \in \operatorname {Im} (f)\]
for some $\alpha_{i, 2}\in M$.

 Notice also that by the minimality of $k_{1}$ we have $k_{2}\geq k_{1}$.

Let $k_{3}$ be the smallest possible integer such that 
 for some $e_{3}\in M$, $e_{3}\notin \mathbb C \cdot e_{1}+{\mathbb C}\cdot e_{2}$ we have 
\[e_{3}t^{k_{3}}+\sum_{i=k_3+1}^{\infty }\alpha _{i,3} \cdot t^{i} \in \operatorname {Im} (f)\]
for some $\alpha _{i,3}\in M$. Notice also that by the minimality of $k_{1}$ and $k_{2}$ we have $k_{3}\geq k_{2}$.

 Continuing until no further elements $k_{i}, e_{i}$ can be added, we define 
 integers $k_{1}, \ldots , k_{n'}$ such that  $k_{1}\leq k_{2}\leq \ldots \leq k_{n'}$. We have also defined the corresponding elements 
 $e_{1}, \ldots , e_{n'}\in M$. Since $M$ has dimension $n$ as a  $\mathbb C$-vector space,  $n'\leq n$.

 There are  $q_{1}, q_{2}, \cdots , q_{n'}\in \mathbb C\langle x_{1}, \ldots , x_{\rho  }\rangle \{t\}$ such that 
\[f(q_{1})=e_{1}t^{k_1}+\sum_{i=k_{1}+1}^{\infty }\alpha_{i,1}\cdot t^{i}\]
\[f(q_{2})=e_{2}t^{k_2}+\sum_{i=k_{2}+1}^{\infty }\alpha_{i,2}\cdot t^{i}\]
 and so on.

We next show that $n'=n$. Since $M$ has dimension $n$ as a vector space over $\mathbb C$,  $n'\leq n$. 
 If $n'<n$, we can find elements $e_{n'+1}, \ldots , e_{n}\in M$ such that $e_{1}, \ldots , e_{n}$ span $M$ as a $\mathbb C$-linear space.
 By assumption $2$ from Subsection \ref{333} there are  $m_{1}, \ldots , m_{n}\in {\mathbb C}\langle x_{1}, \ldots , x_{\rho }\rangle $ such that $f(m_{1}), \ldots , f(m_{n})$ are linearly independent over ${\mathbb C}\{t\}$. Observe also that $\operatorname {Im} (f)\subseteq A((t))=e_{1}{\mathbb C}((t))+\cdots +e_{n}{\mathbb C}((t))$. Therefore,   for each $i\leq n$, $f(m_{i})=\sum_{j=1}^{n} a_{i,j}(t)e_{j}$ for some $a_{i,j}(t)\in {\mathbb C}((t))$. Recall that  the elements $f(m_{i})$ are linearly independent over ${\mathbb C}\{t\}$. It follows that the determinant of the $n$-by-$n$ matrix $A$ whose $i,j$-th entry is $a_{i,j}(t)$ is nonzero. Therefore, $A$ is an invertible matrix in $M_{n}({\mathbb C}((t)))$. It follows that  for each $e_{i}$ there is $g(t)\in {\mathbb C}\{t\}$ such that $e_{i}g(t)\in \operatorname {Im} (f)$. 

 Consequently,  $e_{i}(t^{m'}(1+h(t)))\in \operatorname {Im} (f)$ for sufficiently large $m'$ and for some $h(t)\in t{\mathbb C}\{t\}$. 
 This implies that $e_{i}t^{m'}$ is the leading term of an element from $\operatorname {Im} (f)$. 
Thus $n=n'$; otherwise, we could add
$e_{n'+1}$ to the list, contradicting maximality 
of the construction. 
 Observe also that $e_{i}t^{m'}(1+h(t))^{-1}\in \operatorname {Im} (f)$ and $(1+h(t))^{-1}\in \mathbb C \{t\}$.  
 Therefore, for sufficiently large $m'$ and  for  every $a\in M$  
 \[at^{m'}\in \operatorname {Im} (f).\] Consequently 
$at^{m'+j}\in t^{j}\operatorname {Im} (f)$, hence for any $r\in M\{t\}$ we have 
$t^{m'}r\in \operatorname {Im} (f)$. We can take $m'$ such that $k_{1}, \ldots , k_{n}<m'$.  

$ $

We will now show that \[\operatorname {Im} (f)\subseteq \mathbb C\{t\}f(q_{1})+\cdots +\mathbb C\{t\}f(q_{n}).\]

 Recall that \[f(q_{1})=e_{1}t^{k_1}+\sum_{i=k_{1}+1}^{\infty }\alpha_{i,1}\cdot t^{i}.\] Each $\alpha _{i,1}$ can be written as a linear combination over $\mathbb C$ of $e_{1}, \ldots , e_{n}$. This  implies
   \[f(q_{1}(1+t\cdot h_{1}(t)))=e_{1}t^{k_1}+\sum_{i=k_{1}+1}^{\infty }\alpha_{i,1}'\cdot t^{i}\] with $\alpha _{i,1}'\in {\mathbb C}e_{2}+\ldots +{\mathbb C}e_{n},$
 for some $h_{1}(t)\in {\mathbb C}\{t\}$.

 Similarly, by the minimality of $k_{1}$ for each $j\leq n$ we have   \[f(q_{j}- q_{1}t\cdot h_{j}(t))=e_{j}t^{k_j}+\sum_{i=k_{j}+1}^{\infty }\alpha_{i,j}'\cdot t^{i}\] with $\alpha _{i,j}'\in {\mathbb C}e_{2}+\ldots +{\mathbb C}e_{n},$
 for some $h_{j}(t)\in {\mathbb C}\{t\}$.
 Denote ${\bar q}_{j}=q_{j}-q_{1}t\cdot h_{j}(t)$. We can now apply the same reasoning to elements $f({\bar q}_{j})$ for $j\geq 2$, by using the minimality of $k_{2}$. We obtain that there are ${\bar h}_{j}(t)\in {\mathbb C}\{t\}$ such that 
${\bar q}_{j}-{\bar q}_{2}t\cdot {\bar h}_{j}(t),$ for $j\geq 2$, satisfies \[f({\bar q}_{j}- {\bar q}_{2}t\cdot {\bar h}_{j}(t))=e_{j}t^{k_j}+\sum_{i=k_{j}+1}^{\infty }\alpha_{i,j}''\cdot t^{i}\] with 
$\alpha _{i,j}''\in {\mathbb C}e_{3}+\ldots +{\mathbb C}e_{n}$. 
 Continuing in this way we obtain that for each $j\leq n$ there are $q_{j}'\in q_{j}+t\sum_{1\leq i\leq j}{\mathbb C}\{t\}q_{i}$
  such that \[f(q_{j}')=  e_{j}t^{k_j}+\sum_{i=k_{j}+1}^{\infty }\alpha_{i,j}'''\cdot t^{i}\] with $\alpha _{i,j}'''\in {\mathbb C}e_{j+1}+\ldots +{\mathbb C}e_{n}.$
 
Let $c\in  \mathbb C\langle x_{1}, \ldots , x_{\rho }\rangle $. We will show that
 \[f(c)\in  \mathbb C\{t\}f(q_{1})+\cdots +\mathbb C\{t\}f(q_{n}).\] 
  By the definition of $q_{j}'$ it suffices to show that \[f(c)\in \mathbb C\{t\}f(q_{1}')+\cdots +\mathbb C\{t\}f(q_{n}').\]
 Let \[f(c)=\sum_{i=1}^{n} e_{i}t^{\gamma _{i}}h_{i}(t),\] for some integers $\gamma _{i}$, some $h_{i}(t)\in {\mathbb C}\{t\}$ such that if $h_{i}(t)\neq 0$ then $h_{i}(t)\notin t {\mathbb C}\{t\}$. By the minimality of $k_{1}$ we get 
 $k_{1}\leq \gamma _{1}$ (if $h_{1}(t)=0$ we can also make this assumption). 
 Let $c_{2}=c- q_{1}'h_{1}(t)t^{\gamma _{1}-k_{1}}$. Then,  
\[f(c_{2})=\sum_{i=2}^{n}t^{\beta _{i}}g_{i}(t)e_{i}\] 
for some integers $\beta _{i}$, some $g_{i}(t)\in {\mathbb C}\{t\}$ such that if $g_{i}(t)\neq 0$ then $g_{i}(t)\notin t {\mathbb C}\{t\}$. By the minimality of $k_{2}$ we get 
 $k_{2}\leq \beta _{2}$ (if $g_{2}(t)=0$ we can also make this assumption). Repeating the same argument for the 
 indices $2, \ldots ,n$, we obtain that 
$f(c_{n})=0$ where $c_{n}\in c+\sum_{i=1}^{n}{\mathbb C}\{t\}q_{i}'$. Therefore,
$f(c)\in \sum_{i=1}^{n}{\mathbb C}\{t\}f(q_{i}')$, and hence $f(c)\in \sum_{i=1}^{n}{\mathbb C}\{t\}f(q_{i}).$

$ $

 We will now show that we can assume that $q_{i}\in \mathbb C\langle x_{1}, \ldots , x_{\rho }\rangle [t] $, so that they are polynomials in the variable $t$.

 Let $q_{i}=\sum_{l=0}^{\infty } d_{l}t^{l}$ for some $d_{l}\in \mathbb C\langle x_{1}, \ldots , x_{\rho }\rangle $.   
 Denote $q_{i}'=\sum_{l=0}^{m'+\gamma +1}d_{l}t^{l}$.
 Then  $f(q_{i}')$ and $f(q_{i})$  have the same leading term for each $i$ (since leading terms appear near $t^{k_{i}}$ and $k_{i}<m'$ and also  since by  assumption $3$  from Subsection \ref{333}  $f(d_{l})\in t^{-\gamma }M\{t\}$).
 We can therefore use the same reasoning from the first part of this proof, choosing $q_{i}'$ instead of $q_{i}$ at each step  (so choosing $\alpha _{i,j}$ such that $f(q_{i}')=e_{i}t^{k_{i}}+\sum_{l=k_{i}+1}^{\infty} {\alpha }_{l,j}t^{l}$). Notice that at the $i$-th step, it is not possible to obtain powers of $t$ smaller  than $k_{i}$; this can be shown by looking at the first $i$ for which it happens and observing that if a smaller $k_{i}$ appears by adding some element $q_{i}''$ to our list instead of $q_{i}'$, then the same element added to the original list of $q_{1}, \ldots ,q_{i-1}$ at  the $i$-th step would also give a smaller $k_{i}$ (since $q_{j}$ and  $q_{j}'$ have the same leading coefficients for each $j$),  contradicting the minimality of the original element 
$k_{i}$ and the choice of  $q_{i}$.

  In the first part of this proof it was shown that $\operatorname {Im} (f)\subseteq \mathbb C\{t\}f(q_{1})+\cdots +\mathbb C\{t\}f(q_{n})$. 
 We can  use the same reasoning   for elements $q_{i}'$ instead of elements $q_{i}$ (for $1\leq i\leq n$) and obtain  
 $\operatorname {Im} (f)\subseteq \mathbb C\{t\}f(q_{1}')+\cdots +\mathbb C\{t\}f(q_{n}')$. 

 Therefore we can take elements $q_{1}', \ldots , q_{n}'$ instead of 
 $q_{1}, \ldots , q_{n}$. So we can assume that $q_{1}, \ldots , q_{n}\in \mathbb C\langle x_{1}, \ldots , x_{\rho }\rangle [t]$.

We will now show that we can assume that $q_{j}=1$ for some $j$. 
  Suppose that the pairs $(e_{i}, k_{i})$ with $k_{i}<0$ have been chosen for $i<j$, and that the next pair $(e_{j}, k_{j})$ satisfies $k_{j}\geq 0$. Such $j$ exists, as otherwise the span of $e_{1}, \ldots , e_{n}$ would contain  $1_{A}$ (which is impossible    since we show below  that $1\notin {\mathbb C}e_{1}+\ldots +{\mathbb C}e_{j-1}$ for $j=n+1$). Since $f(1) = 1_{A'}$, 
if $1 \notin {\mathbb C}e_{1} + \cdots  + {\mathbb C}e_{j-1}$, the
minimal possible exponent  for
the next vector is at most $0$;
combined with $k_{j} \geq 0$, this
gives $k_{j} = 0$, so $q_{j} = 1$ may be
chosen (and $e_{j}=1$). It remains to show that $1\notin {\mathbb C}e_{1}+\ldots +{\mathbb C}e_{j-1}$ (where $j\leq n+1$).  Suppose on the contrary that 
$1\in {\mathbb C}e_{1}+\ldots +{\mathbb C}e_{j-1}$.  By the assumption about the choice of $e_{i}, k_{i}$ for $i<j$ (after multiplying by an appropriate power of $t$) we have for each $1\leq i<j$:
 \[e_{i}\in t(A\{t\}+\operatorname {Im} (f)).\]
 Therefore, $1+tc\in t\operatorname {Im} (f)$ for some $c\in A\{t\}$. Since $f$ is a homomorphism of algebras,  
 $\operatorname {Im} (f)\cdot \operatorname {Im} (f)\subseteq \operatorname {Im} (f).$ 
Let $\gamma $ be as in Subsection \ref{333}. We obtain $(1+tc)^{\gamma +1}\in t^{\gamma +1}\operatorname {Im} (f)$, and so $t^{-\gamma -1} (1+tc)^{\gamma +1}=f(d)$ for some $d\in \mathbb C\langle x_{1}, \ldots , x_{\rho }\rangle \{t\}$.   
  Since $(1+tc)^{\gamma +1}\neq 0$ it follows that $f(d)\in t^{-\gamma -1}A\{t\}$ and $f(d)\notin  t^{-\gamma }A\{t\}$, contradicting Assumption $3$ from  Subsection \ref{333}. We have obtained a contradiction. This proves  that we may assume that $q_{j}=1$ for some $j\leq n$.  
\end{proof}

\section{Some properties of elements $q_{i}$}\label{B}
The following proposition is similar to Proposition $3.1$ from \cite{DDS} and has a similar proof.  Recall that elements $p_{i}$ were defined in Notation \ref{not:monomials}. 
\begin{prop}\label{basesnew}
Let notation and assumptions be as in Subsection \ref{333}.  Let $p_{k}$ be as in Notation \ref{not:monomials}. 
 Denote $ker(f)=I$.
 Then the following holds:

\begin{enumerate}
\item   Suppose $\sum_{i=1} ^n \beta _{i}q_{i}\in I$ for some $\beta _{i}\in {\mathbb C}\{t\}$. Then $f(\sum_{i=1} ^n \beta _{i}q_{i})=0$ and consequently  $\beta _{i}=0$ for every $i\in\{1,2,\dots, n\}$.

\item For each $k\in \mathbb N$ there are 
 $\zeta _{i,k}\in \mathbb C$, $\xi _{i,k}(t)\in {\mathbb C}\{t\}$  such that 
\[p_{k}-\sum_{i=1}^{n}(\zeta _{i,k}+t\xi_{i,k}(t))q_{i}\in I.\]
 \item Moreover, all elements of $I$ are $\mathbb C \{t\}$-linear combinations
 of the elements 
\[p_{k}-\sum_{i=1}^{n}(\zeta _{i,k}+t\xi_{i,k}(t))q_{i},\] for some $k$. 
\end{enumerate}
\end{prop}
\begin{proof}
Suppose $\sum_{i=1}^n\beta_i q_i \in I$. Recall that $I= \ker f$, so
\[0 = f(\sum_{i=1}^n\beta_i q_i) = \sum_{i=1}^n\beta_i f(q_i). \]
 Let notation be as in the proof of Proposition \ref{basisnew}. Then 
\[f(q_m) = e_{m}t^{k_m}+\sum_{i=k_{m}+1}^{\infty }\alpha_{i,m}\cdot  t^{i}\]
for each $m\in \{1, \ldots , n\}$. Now suppose that not all $\beta_i$ are equal  to $0$. 
 Let $\xi_{i}t^{j_{i}}$ be the leading term of $\beta _{i}$, for the nonzero $\beta _{i}$.
We know that \[0= f(\sum_{i=1}^n\beta_i q_i). \]
 On the other hand, the leading term of the power series $ f(\sum_{i=1}^n\beta_i q_i)$ is a sum of some 
of the elements $\xi_{i}t^{j_i}e_{i}t^{k_{i}}$ and therefore is nonzero since $e_{1},\ldots ,e_{n}$ are linearly independent over $\mathbb C$ (by their construction). 
Therefore, all $\xi _{i}=0$, so $\beta _{i}=0$.

$ $

We will now show that all elements of $I$ are $\mathbb C\{t\}$-linear combinations
 of elements 
\[p_{k}-\sum_{i=1}^{n}(\zeta _{i,k}q_{i}+t\xi_{i,k}(t)q_{i})\]
for some $k\in \mathbb N, \zeta _{i,k}\in \mathbb C$, $\xi _{i,k}(t)\in {\mathbb C}\{t\}$ (recall that $p_{k}$ are monomials from $\mathbb C\langle x_{1}, \ldots , x_{\rho }\rangle $). 
 The proof  is similar to the proof  in \cite{DDS}, 
 with power series replacing polynomials. 
 Since $\operatorname {Im} (f)=\sum_{i=1}^{n}\mathbb C\{t\}f(q_{i})$, there are 
$\zeta _{i,k}\in \mathbb C$, $\xi _{i,k}(t)\in {\mathbb C}\{t\}$
 such that  \[p_{k}-\sum_{i=1}^{n}(\zeta _{i,k}q_{i}+t\xi_{i,k}(t)q_{i})\in I.\]
 We denote
\[z_{k}=\sum_{i=1}^{n}(\zeta _{i,k}q_{i}+t\xi_{i,k}(t)q_{i})   .\]
Hence \[p_{k}-z_{k}\in I.\]

 Let $r\in I.$ Then $r = \sum_{i=1}^{n'} s_{i}(t)p_i$ for some $s_i(t)\in \mathbb C \{t\}$ and some $n'$ (we consider ordinary ideals in ${\mathbb C}\langle x_{1}, \ldots , x_{\rho }\rangle \{t\}$ so $r$ is a finite sum of elements $s_{i}(t)p_{i}$).  
Observe that $p_i-z_i\in I$, so
\[\sum_{i=1}^{n'} s_i(t) z_i = r - \sum_{i=1}^{n'} s_i(t) (p_{i}-z_{i})\in I.\]

Note that 
\[\sum_{i=1}^{n'} s_i(t)z_i\in \sum_{i=1}^n q_i \mathbb C\{t\} ,\]
 hence \[\sum_{i=1}^{n'} s_i(t)z_i= \sum_{i=1}^n q_i\alpha_i\] 
for some $\alpha_i\in \mathbb C\{t\}$.  It follows that $\sum_{i=1}^n q_i\alpha_i\in I$. By the first part of this proof, $\alpha_i=0$.
Therefore,  $r - \sum_{i=1}^{n'} s_i(t) (p_{i}-z_{i})=0$, as required.
   
\end{proof}

\begin{prop}\label{prop:relnew} Let notation and assumptions be as in Subsection \ref{333}.
    Let $p_k$ be as in Notation \ref{not:monomials}. For any $k=1, 2, \ldots $, there exist $\zeta _{i,k}\in \mathbb C$, $\xi _{i,k}(t)\in {\mathbb C}\{t\}$ such that 
    \[p_{k}-\sum_{i=1}^{n}(\zeta _{i,k}+t\xi_{i,k}(t))q_{i}\in I.\]
    Moreover,  for any $k, m\in\{1,2,\dots, n\}$, there exist $\zeta _{i,k,m}\in \mathbb C$, $\xi _{i,k,m}(t)\in {\mathbb C}\{t\}$ such that 
\[q_{k}q_{m}-\sum_{i=1}^{n}(\zeta _{i,k,m}+t\xi_{i,k,m}(t))q_{i}\in I.\]
\end{prop}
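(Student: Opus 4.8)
The plan is to deduce everything from three things already in place: Proposition~\ref{basisnew}, which gives $\mathrm{Im}(f)=\sum_{i=1}^{n}\mathbb{C}\{t\}f(q_{i})$ with $q_{1},\dots,q_{n}\in\mathbb{C}\langle x,y\rangle[t]$; the linear independence of the $q_{i}$ modulo $I$ recorded in the first part of Proposition~\ref{basesnew}; and the ``Fact~1/Fact~2'' bookkeeping isolated inside the proof of Lemma~\ref{3new}. The first displayed relation is in fact exactly what was produced in the course of proving Proposition~\ref{basesnew}, so I would just reproduce that one-line argument: since $p_{k}\in\mathbb{C}\langle x,y\rangle\subseteq\mathbb{C}\langle x,y\rangle\{t\}$, Proposition~\ref{basisnew} yields $\gamma_{i,k}(t)\in\mathbb{C}\{t\}$ with $f(p_{k})=\sum_{i=1}^{n}\gamma_{i,k}(t)f(q_{i})$, whence $p_{k}-\sum_{i=1}^{n}\gamma_{i,k}(t)q_{i}\in\ker f=I$; writing $\zeta_{i,k}:=\gamma_{i,k}(0)\in\mathbb{C}$ and $\xi_{i,k}(t):=t^{-1}\bigl(\gamma_{i,k}(t)-\zeta_{i,k}\bigr)\in\mathbb{C}\{t\}$ (the constant terms cancel, so the division by $t$ stays inside $\mathbb{C}\{t\}$) delivers the claim.

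For the ``moreover'' relation I would run the identical argument with the polynomial $q_{k}q_{m}$ in place of $p_{k}$. Indeed $q_{k}q_{m}\in\mathbb{C}\langle x,y\rangle[t]\subseteq\mathbb{C}\langle x,y\rangle\{t\}$, so $f(q_{k}q_{m})=f(q_{k})f(q_{m})\in\mathrm{Im}(f)$, and Proposition~\ref{basisnew} provides $\gamma_{i,k,m}(t)\in\mathbb{C}\{t\}$ with $f(q_{k}q_{m})=\sum_{i=1}^{n}\gamma_{i,k,m}(t)f(q_{i})$; hence $q_{k}q_{m}-\sum_{i=1}^{n}\gamma_{i,k,m}(t)q_{i}\in I$, and setting $\zeta_{i,k,m}:=\gamma_{i,k,m}(0)$ together with $\xi_{i,k,m}(t):=t^{-1}(\gamma_{i,k,m}(t)-\zeta_{i,k,m})\in\mathbb{C}\{t\}$ yields the second displayed relation.

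For the convergence statement I would reuse the machinery in the proof of Lemma~\ref{3new}. Recall from there the fixed polynomial $h(t)\notin t\mathbb{C}[t]$ (a factor of $\det M$, where $M$ records the coordinates of $f(q_{1}),\dots,f(q_{n})$ in a basis $e_{1},\dots,e_{n}$ of the space $A$ of Subsection~\ref{A}) and the integer $\beta$ for which $t^{\beta}e_{j}\in h(t)^{-1}\sum_{i=1}^{n}\mathbb{C}[t]f(q_{i})$ for all $j$ (Fact~1), together with the uniqueness of the coefficients expressing any element of $\mathrm{Im}(f)$ over $f(q_{1}),\dots,f(q_{n})$ (Fact~2, i.e. the first part of Proposition~\ref{basesnew}). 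Using the hypothesis $\mathrm{Im}(f)\subseteq t^{-\gamma}A[t]$ I would write $f(q_{k}q_{m})=\sum_{i=1}^{n}\delta_{i}(t)e_{i}$ with $\delta_{i}(t)\in t^{-\gamma}\mathbb{C}[t]$; multiplying by $t^{\gamma+\beta}$ and invoking Fact~1 gives $t^{\gamma+\beta}f(q_{k}q_{m})\in h(t)^{-1}\sum_{i=1}^{n}\mathbb{C}[t]f(q_{i})$, and comparing this with $f(q_{k}q_{m})=\sum_{i}\gamma_{i,k,m}(t)f(q_{i})$ via Fact~2 forces $\gamma_{i,k,m}(t)\in h(t)^{-1}\mathbb{C}[t]$. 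Since $h$ has finitely many roots, all nonzero, each $\gamma_{i,k,m}$, and hence each $\xi_{i,k,m}(t)=t^{-1}(\gamma_{i,k,m}(t)-\gamma_{i,k,m}(0))$, is absolutely convergent on $(-e,e)$ for any $0<e$ below the least modulus of a root of $h$; as there are only finitely many pairs $(k,m)\in\{1,\dots,n\}^{2}$ and $h$ does not depend on $k,m$, one single such $e$ works for all of them.

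The only step I expect to require genuine care is the convergence claim: one must confirm that the matrix-and-Cramer bookkeeping of Lemma~\ref{3new} — in particular the emergence of a single denominator $h(t)\notin t\mathbb{C}[t]$ and the uniqueness of the expansion coefficients — carries over verbatim when $p_{k}$ is replaced by $q_{k}q_{m}$, whose $t$-degree may exceed that of the monomials $p_{k}$. The sole input actually used is $f(q_{k}q_{m})\in t^{-\gamma}A[t]$, which holds because $\mathrm{Im}(f)\subseteq t^{-\gamma}A[t]$ by hypothesis, so no new obstruction arises; everything else reduces to the elementary splitting of a power series into its value at $0$ plus $t$ times a power series.
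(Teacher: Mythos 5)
Your proof is correct and follows essentially the same route as the paper, whose entire proof is the one-line citation ``It follows from Lemma~\ref{3new}.'' You have made explicit what that citation leaves implicit: obtaining the $q_kq_m$ relation directly from Proposition~\ref{basisnew} by splitting $\gamma_{i,k,m}(t)=\zeta_{i,k,m}+t\xi_{i,k,m}(t)$, and then rerunning the convergence bookkeeping inside Lemma~\ref{3new} (Facts~1 and~2, the single denominator $h(t)\notin t\mathbb{C}[t]$, and the hypothesis $\mathrm{Im}(f)\subseteq t^{-\gamma}A[t]$) with $q_kq_m$ in place of $p_k$, noting that a single $e>0$ suffices since $h$ does not depend on the pair $(k,m)$.
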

\begin{proof} It follows from Proposition \ref{basesnew}. 
\end{proof}
\section{The algebra $\mathcal N$}\label{6}

  In this section we will define the algebra $\mathcal N$. We use the notation of Proposition \ref{prop:relnew}. We will use non-unital algebras and the First Isomorphism Theorem for non-unital algebras (see \cite{Bresar} for a reference on non-unital algebras).

\begin{prop}\label{8new}  Let $\mathbb C\langle y_1,\dots, y_n \rangle'\{t\}$ be the free non-unital $\mathbb C\{t\}$-algebra generated by free generators $y_{1}, \ldots , y_{n}$. Let $J'$ be the ideal generated in this algebra by 
 \[ y_{k}y_{m}-\sum_{i=1}^{n}(\zeta _{i,k,m}+t\xi_{i,k,m}(t))y_{i},\] for all $k,m\leq n$. 

   Let  $\mathcal N$ be the quotient algebra $\mathbb C\langle y_1,\dots, y_n \rangle' \{t\}/J'.$ 
 Then  $\mathcal N$ has an identity element $y_{j}+J'$ for some $j\leq n$.

 Moreover, $\mathcal N$ can be presented as a   
$\mathbb C\{t\}$-linear space  with  generators $d_{1},\dots, d_n$ which span ${\mathcal N}$ as a ${\mathbb C}\{t\}$-module  
 and with ${\mathbb C}\{t\}$-linear  associative multiplication given by  \[ d_{k}\ast _{t}d_{m}=\sum_{i=1}^{n}(\zeta _{i,k,m}+t\xi_{i,k,m}(t))d_{i},\]
  for all $k, m\leq n$. 

  As  a set ${\mathcal N}={\mathbb C}\{t\}d_{1}+\ldots +{\mathbb C}\{t\}d_{n}$. Moreover, the elements $d_{1}, \ldots , d_{n}$ are linearly independent over ${\mathbb C}\{t\}$. Let $D$ be a non-unital ${\mathbb C}\{t\}$-algebra generated by $d_{1}, \ldots , d_{n}$ with the above multiplication $\ast _{t}$.
 Then,  $d_{j}$ is the identity element in $D$. 
 
\end{prop}
\begin{proof} {\em Part $1$.} We will first show that the relations 
 \[d_{k}\ast _{t}d_{m}-\sum_{i=1}^{n}(\zeta _{i,k,m}+t\xi_{i,k,m}(t))d_{i}\] give a well-defined associative multiplication on
 ${\mathbb C}\{t\}d_{1}+ \cdots +{\mathbb C}\{t\}d_{n}$ (so they give a well-defined multiplication table).

 Recall from Proposition \ref{prop:relnew} that for any $k, m\in\{1,2,\dots, n\}$  $\zeta _{i,k,m}\in \mathbb C$, $\xi _{i,k,m}(t)\in {\mathbb C}\{t\}$ are such that 
\[q_{k}q_{m}-\sum_{i=1}^{n}(\zeta _{i,k,m}+t\xi_{i,k,m}(t))q_{i}\in I.\]
 Hence we obtain the following relations
\[f(q_{k})f(q_{m})-\sum_{i=1}^{n}(\zeta _{i,k,m}+t\xi_{i,k,m}(t))f(q_{i})=0.\]
  Fix $i',j',k'\in \{1, \ldots , n\}$. To see that  the multiplication table is well-defined (so that these relations give a  
well-defined associative multiplication) observe that by using 
 the formulas \[f(q_{k}) f(q_{m})-\sum_{i=1}^{n}(\zeta _{i,k,m}+t\xi_{i,k,m}(t))f(q_{i})\]
  for various $k,m$ several times (and without using any other formulas) we obtain 
\[(f(q_{i'})f(q_{j'}))f(q_{k'})=\sum_{l=1}^{n}c_{l}(t)f(q_{l}), f(q_{i'})(f(q_{j'})f(q_{k'}))=\sum_{l=1}^{n}c_{l}'(t)f(q_{l}),\] for some $c_{l}(t), c_{l}'(t)\in {\mathbb C}\{t\}$.

By associativity, we get
 $\sum_{l=1}^{n}(c_{l}(t)-c_{l}'(t))f(q_{l})=0$. By Proposition \ref{basesnew} (1)
we get $c_{l}(t)=c_{l}'(t)$.

 Because in the above  lines we only used  the formulas \[f(q_{k})f(q_{m})-\sum_{i=1}^{n}(\zeta _{i,k,m}+t\xi_{i,k,m}(t))f(q_{i})\]  to obtain the associativity, without using any other relations, it follows that the analogous results hold when we replace $f(q_{i})$ with $d_{i}$, since $d_{i}$ satisfy relations:
\[    d_{k}\ast _{t}d_{m}=\sum_{i=1}^{n}(\zeta _{i,k,m}+t\xi_{i,k,m}(t))d_{i}.\]
 Therefore, we obtain that the above formulas give a well-defined associative multiplication on 
 ${\mathbb C}\{t\}d_{1}+\ldots +{\mathbb C}\{t\}d_{n}$. 

Now this implies that $(d_{i'}\ast _{t}d_{j'})\ast _{t}d_{k'}=d_{i'}\ast _{t}(d_{j'}\ast _{t}d_{k'})$. This is because we can use the same steps when using relations  \[d_{k}\ast _{t} d_{m}-\sum_{i=1}^{n}(\zeta _{i,k,m}d_{i}+t\xi_{i,k,m}(t)d_{i})\] instead of relations \[f(q_{k})f( q_{m})-\sum_{i=1}^{n}(\zeta _{i,k,m}f(q_{i})+t\xi_{i,k,m}(t)f(q_{i})).\]

{\em Part $2$.}  We will first show that $D$ has an identity element $1_{D}$ which is $d_{j}$ for some $1\leq j\leq n$.   Recall that the identity element of a ring is unique. 
 By Proposition \ref{basisnew} (3), $f(q_{j})$ is the identity element in $Im(f)$. 
Therefore,  for each $i$, 
 \[f(q_{j})f(q_{i})=f(q_{i})=f(q_{i})f(q_{j}).\]
 Recall that \[f(q_{k})f(q_{m})=\sum_{i=1}^{n}(\zeta _{i,k,m}+t\xi_{i,k,m}(t))f(q_{i}),\] so by
 using these relations to simplify the above equation we obtain {\bf  Fact $1$}: the 
 elements  $f(q_{j})f(q_{i})-f(q_{i})$ and $f(q_{i})f(q_{j})-f(q_{i})$ are linear combinations of the elements 
\[f(q_{k})f(q_{m})-\sum_{i=1}^{n}(\zeta _{i,k,m}+t\xi_{i,k,m}(t))f(q_{i}).\]

 Indeed, we can substitute \[f(q_{k})f(q_{m}):=\sum_{i=1}^{n}(\zeta _{i,k,m}+t\xi_{i,k,m}(t))f(q_{i})\] in $f(q_{j})f(q_{i})-f(q_{i})$ at each place where it occurs. We obtain a linear combination
 over ${\mathbb C}\{t\}$ of the  elements  $f(q_{1}), \ldots , f(q_{n})$.  This linear combination must be zero. 
  This  follows because $f(q_{1}), \ldots , f(q_{n})$ are linearly independent over ${\mathbb C}\{t\}$ by Proposition 
\ref{basesnew}.

 Since the elements $d_{1}, \ldots , d_{n}$ satisfy relations 
$d_{k}\ast _{t} d_{m}=\sum_{i=1}^{n}(\zeta _{i,k,m}+t\xi_{i,k,m}(t))d_{i}$
 it follows (by Fact $1$) that they also satisfy relations:
$d_{j}\ast _{t}d_{i}-d_{i}$ and $d_{i}\ast _{t}d_{j}-d_{i}$.   
  Therefore, by the above,  the elements  $d_{j}\ast _{t}d_{i}-d_{i}$ and $d_{i}\ast _{t}d_{j}-d_{i}$ are linear combinations of the elements 
\[d_{k}\ast _{t}d_{m}-\sum_{i=1}^{n}(\zeta _{i,k,m}+t\xi_{i,k,m}(t))d_{i}.\]
 Therefore, 
 $d_{j}$ satisfies the axioms of the identity element, and so  is the identity element in $D$.

Observe also that since the relations
\[y_{k}y_{m}-\sum_{i=1}^{n}(\zeta _{i,k,m}+t\xi_{i,k,m}(t))y_{i}\]
 belong to $J'$. Similarly, it  follows that  $y_{j}+J'$ satisfies the axioms of the identity element in $\mathbb C\langle y_1,\dots, y_n \rangle' \{t\}/J'$, and so  is the unique identity element in $\mathbb C\langle y_1,\dots, y_n \rangle '\{t\}/J'$ (recall that by the assumptions of this proposition,  $\mathbb C\langle y_1,\dots, y_n \rangle' \{t\}$ is a non-unital algebra so there is no auxiliary unit $1$).

{\em Part $3$.} 
 We will now show  that the elements $d_{i}$ are linearly independent over ${\mathbb C}\{t\}
 $. 
 We will first show  that the elements $y_{i}+J'$ are linearly independent over ${\mathbb C}\{t\}
 $. 

  Consider a  homomorphism of $\mathbb C\{t\}$-algebras $g:{\mathbb C}\langle y_{1}, \ldots , y_{n}\rangle' \{t\}/J'\rightarrow \operatorname {Im} (f)$  given by $g(y_{i}+J')= f(q_{i})$. It is a well-defined homomorphism of unital algebras, since \[g(y_{k}y_{m}-\sum_{i=1}^{n}(\zeta _{i,k,m}+t\xi_{i,k,m}(t))y_{i}+J')=
f(q_{k})f(q_{m})-\sum_{i=1}^{n}(\zeta _{i,k,m}f(q_{i})+t\xi_{i,k,m}(t)f(q_{i}))=0.\] 

 Moreover,  the  image of the identity element 
$y_{j}+J'\in {\mathbb C}\langle y_{1}, \ldots , y_{n}\rangle '\{t\}/J'$ 
  is $g(y_{j}+J')=f(q_{j})$, and it is an identity element in  $\operatorname {Im} (f)$ by Part $2$. 

By Proposition \ref{basesnew} (1), the elements $f(q_{i})$ are linearly independent over ${\mathbb C}\{t\}$. Since $g(y_{i}+J')=f(q_{i})$, it follows that the elements $y_{i}+J'$ are linearly independent over ${\mathbb C}\{t\}$. Consequently, $d_{i}$ are linearly independent over ${\mathbb C}\{t\}$.

{\em Part $4$.} We will now show that the algebra $D$ is isomorphic to $\mathcal N$ as a unital algebra. 
  Consider a  homomorphism of $\mathbb C\{t\}$-algebras $ h:D\rightarrow {\mathbb C}\langle y_{1}, \ldots , y_{n}\rangle '\{t\}/J'$ given by $h(d_{i})=y_{i}+J'$. It is a well-defined homomorphism of non-unital algebras since 
$h(  d_{k}\ast _{t}d_{m}-\sum_{i=1}^{n}(\zeta _{i,k,m}+t\xi_{i,k,m}(t))d_{i}  )=y_{k}y_{m}-\sum_{i=1}^{n}(\zeta _{i,k,m}+t\xi_{i,k,m}(t))y_{i} +J'= 0+J'$. 
  By Part $3$, the  elements $y_{i}+J'$ are linearly independent over ${\mathbb C}\{t\}$. It follows that $d_{i}$ are linearly independent over ${\mathbb C}\{t\}$. Therefore $D$ is a free ${\mathbb C}\{t\}$-module of rank $n$. Moreover $h$ is injective, since if $h(\sum_{i=1}^{n}\sigma _{i}(t)d_{i})=0$, where $\sigma _{i}(t)\in {\mathbb C}\{t\}$, then $\sum_{i=1}^{n}\sigma _{i}(t)y_{i}+ J'=0+J'$, and since  
 $y_{i}+J'$ are linearly independent over ${\mathbb C}\{t\}$, all $\sigma _{i}(t)=0$.
 
The relations $y_{k}y_{m}-\sum_{i=1}^{n}(\zeta _{i,k,m}+t\xi_{i,k,m}(t))y_{i} $ 
reduce every word of length
at least two to a ${\mathbb C}\{t\}$-linear
combination of $y_{1}, \ldots  , y_{n}$;
hence the quotient is
spanned by $y_{i} + J'$, all of 
which lie in $\operatorname {Im} (h)$. Since $y_{j}+J'$ is an identity element in ${\mathbb C}\langle y_{1}, \ldots , y_{n}\rangle '\{t\}/J'$, and the identity element is unique, and since $y_{i}+J'\in  \operatorname {Im} (h)$, it follows that the identity element of 
 $ {\mathbb C}\langle y_{1}, \ldots , y_{n}\rangle '\{t\}/J'$ is in $\operatorname {Im} (h)$, so $h$ is onto.  
 
Since $D$ has an identity element $1_{D}$ and $h$ is onto, $h(1_{D})$ satisfies the axioms of the identity element in $\operatorname {Im} (h)$. Therefore $h$ is an isomorphism of unital algebras.
  By the First Isomorphism Theorem for algebras,  
 ${\mathbb C}\langle y_{1}, \ldots , y_{n}\rangle '\{t\}/J'$ is isomorphic to $D$. Therefore, $\mathcal N$ is isomorphic to $D$, as required. 
\end{proof}

\begin{proposition}\label{Im} The ${\mathbb C}\{t\}$-algebra $\mathcal N$ is isomorphic to $\operatorname {Im} (f)$. 
\end{proposition}
\begin{proof} We will use the presentation of $\mathcal N$ as a ${\mathbb C}\{t\}$-algebra generated by $d_{1}, \ldots , d_{n}$ subject to relations 
 $d_{k} \ast_{t}d_{m}=\sum_{i=1}^{n}(\zeta _{i,k,m}d_{i}+t\xi_{i,k,m}(t)d_{i})$.

Observe that by Proposition \ref{8new} the element $d_{j}$ is the unique identity element in $\mathcal N$ (since the identity element in any ring is unique) so every element in $\mathcal N$ is of the form $\sum_{i=1}^{n}d_{i}\sigma _{i}(t)$ for some $\sigma _{i}(t)\in {\mathbb C}\{t\}$, including the identity element.

 Consider a homomorphism of ${\mathbb C}\{t\}$-algebras 
$h:{\mathcal N}\rightarrow \operatorname {Im} (f)$ given by $h(d_{i})=f(q_{i})$. This homomorphism is well-defined since \[h(d_{k} \ast_{t}d_{m}-\sum_{i=1}^{n}(\zeta _{i,k,m}+t\xi_{i,k,m}(t))d_{i})=f(q_{k}) f(q_{m})-\sum_{i=1}^{n}(\zeta _{i,k,m}+t\xi_{i,k,m}(t))f(q_{i})=0.\]

 Observe that $\ker (h)=0$, since if $h(\sum_{i=1}^{n}\sigma _{i}(t)d_{i})=0$ for some $\sigma _{i}(t)\in {\mathbb C}\{t\}$ then    $0= h(\sum_{i=1}^{n}\sigma _{i}(t)d_{i})= \sum_{i=1}^{n}\sigma _{i}(t)f(q_{i})=0$. By Proposition \ref{basesnew} (1) this implies that all $\sigma _{i}(t)=0$.

Observe that $h$ is onto. Since ${\mathbb C}\{t\}f(q_{1})+\cdots +{\mathbb C}\{t\}f(q_{n}) \subseteq \operatorname {Im} (h)$ because $h(d_{i})=f(q_{i})$, the ${\mathbb C}\{t\}$-span $\sum_{i=1}^{n} f(q_{i}){\mathbb C}\{t\}$ is contained in  $\operatorname {Im} (h)$;  by Proposition  \ref{basisnew} this span equals  $ \operatorname {Im} (f)$. Therefore, $ \operatorname {Im} (f)\subseteq  \operatorname {Im} (h)$.  

 By Proposition \ref{8new}, $d_{j}$
 is the identity element in $\mathcal N$. As shown in the proof of 
Proposition \ref{basisnew} (3), $f(q_{j})=h(d_{j})$ is the identity element in $\operatorname {Im} (f)$, so $h$ is a surjective homomorphism of unital algebras.
By the First Isomorphism Theorem for ${\mathbb C}\{t\}$-algebras we obtain, since $\ker h$ is trivial, 
\[{\mathcal N}\cong {\mathcal N}/\ker h\cong   \operatorname {Im} (h).\] 
\end{proof}
 \section{The algebra $N$}\label{N*}

 In this section we will  define the algebra $N$. The construction is similar to that in \cite{DDS}. We use the  notation of  Proposition \ref{prop:relnew}.  
\begin{notation}\label{bar q} Denote \[q_{i}=\bar {q}_{i}+{\tilde {q}}_{i},\]
 where $\bar {q}_{i}\in \mathbb C\langle x_{1}, \ldots , x_{\rho } \rangle$ and $\tilde {q}_{i}\in t\mathbb C\langle x_{1}, \ldots , x_{\rho } \rangle \{t\}$.
 Notice that 
\[ p_{k}-\sum_{i=1}^{n}\zeta _{i,k}\bar {q}_{i}\in I+t\mathbb C\langle x_{1}, \ldots , x_{\rho } \rangle \{t\}.\]
\end{notation}

\begin{notation}\label{not:N} Let notation and assumptions be as in Subsection \ref{333}.
We denote by $J$ the set consisting of $\mathbb C$-linear combinations of elements 
\begin{equation}\label{eq:J}
    p_{k}-\sum_{i=1}^{n}\zeta _{i,k}\bar {q}_{i}\in \mathbb C\langle x_{1}, \ldots , x_{\rho } \rangle,
\end{equation}
where $\zeta _{i,k}$ are as in Proposition \ref{prop:relnew}.
 Let $\langle J\rangle $ be the ideal of $\mathbb C\langle x_{1}, \ldots , x_{\rho } \rangle$ generated by elements from $J$. Let $N$ be the quotient algebra $ \mathbb C\langle x_{1}, \ldots , x_{\rho } \rangle/\langle J\rangle$. 
 Notice that by Proposition \ref{prop:relnew}
\[{\bar {q}_{k}}{\bar {q}_{m}}-\sum_{i=1}^{n}\zeta _{i,k,m}{\bar {q}_{i}}\in  (I+t\mathbb C\langle x_{1}, \ldots , x_{\rho }\rangle \{t\}) \cap {\mathbb C}\langle x_{1}, \ldots , x_{\rho }\rangle.\]
 \end{notation}

\begin{lemma}\label{2new} Let $e\in \mathbb C\langle x_{1}, \ldots , x_{\rho  } \rangle$.
 We have $e\in \langle J \rangle$ if and only if  $e+e'\in I$ for some $e'\in t\mathbb C\langle x_{1}, \ldots , x_{\rho  } \rangle \{t\}$. In particular, $\langle J\rangle \subseteq I+t\mathbb C\langle x_{1}, \ldots , x_{\rho } \rangle \{t\}$.  
\end{lemma} 
\begin{proof} The proof is the same  as that  of Lemma $3.4$ in \cite{DDS} (the only difference is that we use power series instead of polynomials). We repeat it for convenience of the reader: 
 If $e\in J$, then by Notation \ref{not:N}, $e = \sum_k \beta_k (p_k-l_k)$ where $l_k = \sum_{i=1}^{n}\zeta _{i,k}\bar {q}_{i} $ and $\beta_k\in \mathbb C$. Then let 
    \[{l'_k} = \sum_{i=1}^{n} \zeta_{i, k}{\tilde q_i}+\sum_{i=1}^{n} t\xi_{i, k}(t)q_i. \]
     Then $e'= \sum_i \beta_i {l'_i}$ is such that $e+e'\in I$ by Proposition \ref{prop:relnew}. If $e$ is a finite sum of elements from  ${\mathbb C}\langle x_{1}, \ldots , x_{\rho }\rangle \cdot J\cdot {\mathbb C}\langle x_{1}, \ldots , x_{\rho }\rangle $ then $e+e'\in I$ for an  appropriate $e'$ by the above.

    Suppose, on the other hand, that $e\in \mathbb C\langle x_{1}, \ldots , x_{\rho }\rangle$ is such that $e+e'\in I$ for some $e'\in t\mathbb C\langle x_{1}, \ldots , x_{\rho } \rangle \{t\}$. Then by Proposition \ref{basesnew} (3) we get that
    \[e+e' \in \sum_k\mathbb C\{t\}(p_k-t_k),\]
    where $t_k=\sum_{i=1}^{n} (\zeta_{i,k} + t\xi_{i,k}(t))q_i$. So 
    $e+e' = \sum_k \alpha_k (p_k-t_k)$ 
    for some $\alpha_k\in \mathbb C\{t\}$. Let $\alpha_k = r_k+{\bar r}_k$ where $r_k\in \mathbb C$ and ${\bar r}_k\in t\mathbb C\{t\}$. Denote $l_k = \sum_{i=1}^n \zeta_{i, k}{\bar q}_i$. By  
 comparing constant terms it follows that 
    \[e = \sum_k r_k(p_k-l_k).\]
   Recall that  $J$ denotes the
$\mathbb C$-linear span of the
displayed elements. So $e\in J\subseteq \langle J\rangle$, as required.

    Notice that if $e\in \langle J\rangle $ then $e+e'\in I$, for some $e'\in t\mathbb C\langle x_{1}, \ldots , x_{\rho } \rangle \{t\}$ by a previous result of this proof. Therefore, for some $i\in I$,  
    \[e=i-e' \in I+t\mathbb C \langle x_{1}, \ldots , x_{\rho } \rangle \{t\}.\]
    Note that $e$ is an arbitrary element of $\langle J\rangle $, hence 
    $\langle J\rangle\subseteq  I+t\mathbb C\langle x_{1}, \ldots , x_{\rho } \rangle \{t\}.$

\end{proof}

\begin{prop}\label{3newest} The dimension of $N$ equals $n$. Moreover, the elements $\bar {q}_{k}+\langle J\rangle\in N$ are a basis of $N$ as a $\mathbb C$-vector space.
\end{prop}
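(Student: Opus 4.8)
The plan is to prove the two assertions separately: (i) the $n$ classes $\bar q_1+\langle J\rangle,\dots,\bar q_n+\langle J\rangle$ span $N=\mathbb C\langle x,y\rangle/\langle J\rangle$ over $\mathbb C$, so $\dim N\le n$; and (ii) these classes are linearly independent over $\mathbb C$, so $\dim N\ge n$. Together these give $\dim N=n$ and that the classes of $\bar q_1,\dots,\bar q_n$ form a basis (throughout, $\bar q_k+J$ abbreviates $\bar q_k+\langle J\rangle$, since $N$ is the quotient by the ideal $\langle J\rangle$).

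For (i): for an arbitrary monomial $p_k$ of $\mathbb C\langle x,y\rangle$, Proposition~\ref{prop:relnew} together with Notation~\ref{bar q} gives $p_k-\sum_{i=1}^n\zeta_{i,k}\bar q_i\in J\subseteq\langle J\rangle$, so in $N$ we have
\[
p_k+\langle J\rangle=\sum_{i=1}^n\zeta_{i,k}\bigl(\bar q_i+\langle J\rangle\bigr).
\]
Since the monomials span $\mathbb C\langle x,y\rangle$, their images span $N$, and by the displayed identity they all lie in the span of $\bar q_1+\langle J\rangle,\dots,\bar q_n+\langle J\rangle$; hence these $n$ elements span $N$. This step is essentially immediate from the way $J$ and the $\bar q_i$ were defined.

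For (ii), which is where the real work is, I would start from a relation $\sum_{i=1}^n c_i\bar q_i\in\langle J\rangle$ with $c_i\in\mathbb C$ and aim to conclude all $c_i=0$. By Lemma~\ref{2new}, $\langle J\rangle\subseteq I+t\mathbb C\langle x,y\rangle\{t\}$, so $\sum_i c_i\bar q_i=a+e'$ with $a\in I$ and $e'\in t\mathbb C\langle x,y\rangle\{t\}$. Using $q_i=\bar q_i+\tilde q_i$ with $\tilde q_i\in t\mathbb C\langle x,y\rangle\{t\}$ (Notation~\ref{bar q}), this rearranges to
\[
\sum_{i=1}^n c_i q_i = a + tw,\qquad a\in I,\ w\in\mathbb C\langle x,y\rangle\{t\}.
\]
Applying $f$ and using $f(a)=0$ gives $\sum_i c_i f(q_i)=t\,f(w)$; writing $f(w)=\sum_i g_i(t)f(q_i)$ with $g_i\in\mathbb C\{t\}$ (possible by Proposition~\ref{basisnew}), I get $\sum_{i=1}^n\bigl(c_i-t\,g_i(t)\bigr)f(q_i)=0$, i.e.\ $\sum_i\bigl(c_i-t\,g_i(t)\bigr)q_i\in\ker f=I$. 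Then Proposition~\ref{basesnew}(1), applied with $\beta_i=c_i-t\,g_i(t)\in\mathbb C\{t\}$, forces $c_i-t\,g_i(t)=0$ for all $i$, and comparing constant terms gives $c_i=0$.

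I expect the main obstacle to be exactly the translation in (ii) between the ideal $\langle J\rangle$ living in $\mathbb C\langle x,y\rangle$ and the kernel $I$ living in $\mathbb C\langle x,y\rangle\{t\}$: Lemma~\ref{2new} is the tool that converts a $\mathbb C$-linear relation modulo $\langle J\rangle$ into one in $I$ up to an error in $t\mathbb C\langle x,y\rangle\{t\}$, and one must check that passing from $\bar q_i$ to $q_i$ only changes things by such an error, so that Proposition~\ref{basesnew}(1) (the uniqueness of the expansion of $\mathrm{Im}(f)$ along $f(q_1),\dots,f(q_n)$) can be invoked. The remaining manipulations are routine bookkeeping.
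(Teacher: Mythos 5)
Your proposal is correct and follows essentially the same route as the paper: span via the defining relations of $J$ (so at most $n$ dimensions), then linear independence by converting a $\mathbb C$-relation modulo $\langle J\rangle$ into a $\mathbb C\{t\}$-relation in $\ker f$ via Lemma~\ref{2new} and the $q_i=\bar q_i+\tilde q_i$ split, killed by the uniqueness in Proposition~\ref{basesnew}(1). Your version makes the application of $f$ explicit and correctly cites Proposition~\ref{basesnew}(1) (the paper's proof writes Proposition~\ref{basisnew} at the final contradiction, which appears to be a typo for~\ref{basesnew}), but the underlying argument is the same.
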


\begin{proof}  We use the same proof as in Proposition $3.5$ in \cite{DDS}, but we  use power series instead of polynomials. Recall that by Proposition \ref{basisnew}
 \[\mathbb C\langle x_{1}, \ldots , x_{\rho} \rangle\subseteq \sum_{k=1}^{n}\mathbb C \{t\}{ {q}_{k}}+ I.\]

Recall that $q_{k}-{\bar q}_{k}\in t\mathbb C\langle x_{1}, \ldots , x_{\rho } \rangle\{t\}$. 
Moreover, ${\bar q}_{k}\in \mathbb C\langle x_{1}, \ldots , x_{\rho } \rangle$.  Notice also that $ J \subseteq \mathbb C\langle x_{1}, \ldots , x_{\rho }\rangle $. If $u\in I$, write  
  $u=u_{0}+(u-u_{0})$, where $u-u_{0}$ is divisible by $t$.  Lemma \ref{2new} applied to $u_{0}$ shows $u_{0}\in \langle J\rangle.$ 
It follows that
\[\mathbb C\langle x_{1}, \ldots , x_{\rho } \rangle\subseteq
 \sum_{k=1}^{n}\mathbb C {\bar {q}_{k}}+\langle J\rangle+t\mathbb C\langle x_{1}, \ldots , x_{\rho } \rangle\{t\},\] 
 hence by comparing terms at $t^{0}$ we have 
 $\mathbb C\langle x_{1}, \ldots , x_{\rho }\rangle\subseteq
 \sum_{k=1}^{n}\mathbb C {\bar {q}_{k}}+\langle J\rangle .$
 Therefore, 
$\{{\bar {q}_{k}}+\langle J\rangle\mid k\in \{1,2, \ldots ,n\}\}$  span $N$ as a $\mathbb C$-vector space.
 Therefore  the dimension of $N$ does not exceed $n$.

We now show that the elements ${\bar {q}_{k}}+\langle J\rangle\in N$ for $k\in \{1,2, \ldots ,n\}$ are linearly independent over $\mathbb C$.
Suppose on the contrary that we have
\[\sum_{i=1}^{n}\xi_{i}{\bar {q}_{i}}\in \langle J\rangle \]
for some $\xi _{i}\in \mathbb {C}$. By Lemma \ref{2new} and Proposition \ref{basisnew} we have  \[\langle J\rangle \subseteq I+t\mathbb C\langle x_{1}, \ldots , x_{\rho }\rangle \{t\}\subseteq I+t\sum_{i=1}^{n}{\mathbb C}\{t\}{ {q}_{i}}.\]
 Recall that \[q_{i}-\bar {q}_{i}\in t\mathbb C\langle x_{1}, \ldots , x_{\rho } \rangle \{t\}\subseteq t(\sum_{i=1}^{n}\mathbb C \{t\}q_{i}+ I).\]
 Therefore $\sum_{i=1}^{n}\xi_{i}{\bar {q}_{i}}\in \langle J\rangle $ implies
\[\sum_{i=1}^{n}\xi_{i}{ {q}_{i}}\in \langle J\rangle  + t(\sum_{i=1}^{n}\mathbb C\{t\}q_{i}+ I)\subseteq I+t\sum_{i=1}^{n}{\mathbb C}\{t\}{ {q}_{i}}. \]
 It follows that there is $e'\in t\sum_{i=1}^{n}{\mathbb C}\{t\}q_{i}$ such that $\sum_{i=1}^{n}\xi _{i}q_{i}-e'\in I$. If $e'=t\sum_{i=1}^{n}\sigma _{i}(t)q_{i}$ then $\sum_{i=1}^{n}(\xi _{i}-t\cdot \sigma _{i}(t))q_{i}\in I$. Proposition \ref{basesnew} (1) gives $\xi _{i}-t\cdot \sigma _{i}(t)=0$ for all $i$. Since $\xi _{i}\in \mathbb C$ and  $t\cdot \sigma _{i}(t)\in t\cdot \mathbb C\{t\}$, all $\xi _{i}=0$, contradicting the assumed non-trivial linear relation. 
\end{proof}
\begin{prop}\label{7new} Let $\mathbb C\langle y_1,\dots, y_n \rangle'$ be the free non-unital  $\mathbb C$-algebra generated by free generators $y_{1}, \ldots , y_{n}$. Let $J''$ be the ideal generated in this algebra by the elements  \[ y_{k}y_{m}-\sum_{i=1}^{n}\zeta _{i,k,m}y_{i},\] for all $k, m\leq n$. Let  $R$ be the quotient $\mathbb C$-algebra $\mathbb C\langle y_1,\dots, y_n \rangle'/ J''$. Then  $y_{j}+J''$ is the identity element and $N$ is isomorphic (as a unital $\mathbb C$-algebra) to  $R$ (where ${j}$ is as in Proposition \ref{8new}).  
 
 Moreover, $N$ can be presented as  a 
$\mathbb C$-vector space  with  generators $d_{1},\dots, d_n$ which span $N$ as a $\mathbb C$-vector space 
 and with $\mathbb C$-linear  associative multiplication given by  \[ d_{k}\cdot d_{m}=\sum_{i=1}^{n}\zeta _{i,k,m}d_{i},\]
  for all $k, m\leq n$. Denote by $D'$ a  non-unital algebra on the set 
${\mathbb C}d_{1}+\ldots +{\mathbb C}d_{n}$ with multiplication $\cdot $.
 Then, $D'$ has  an identity element  $d_{j}$, where ${j}$ is as in Proposition \ref{8new}.  As a set $N={\mathbb C}d_{1}+\ldots +{\mathbb C}d_{n}$, and the  elements $d_{1}, \ldots , d_{n}$ are linearly independent over $\mathbb C$.
\end{prop}
\begin{proof}  We use the same  proof as in Proposition $3.6$ of  \cite{DDS}, but we adjust it to the context of  power series, and add additional explanations about the identity element $1_{N}$. 

 Let \[\zeta:\mathbb C\langle y_1,\dots, y_n \rangle'\rightarrow N\]
  be a homomorphism of non-unital $\mathbb C$-algebras defined as $\zeta (y_{i})={\bar {q}_{i}}+\langle J\rangle$.
By Proposition \ref{3newest},  elements $\bar {q}_{k}+\langle J\rangle\in N$ are a basis of $N
$ as a $\mathbb C$-vector space; therefore $\zeta $ is onto.

 Let \[{\tilde J} =\ker (\zeta )\subseteq \mathbb C\langle y_1,\dots, y_n \rangle'.\]
 Then by the First Isomorphism Theorem for non-unital  $\mathbb C$-algebras  $\mathbb C\langle y_1,\dots, y_n \rangle'/{\tilde J}$ is isomorphic (as a non-unital ${\mathbb C}$-algebra) to 
$ Im (\zeta )=N$. Observe that $J''\subseteq {\tilde J}$. To show that,  
 recall that $J''$ is the ideal of $\mathbb C\langle y_1,\dots, y_n \rangle '$ generated by  elements 
\[ y_{k}y_{m}-\sum_{i=1}^{n}\zeta _{i,k,m}y_{i}\]
where $\zeta_{i,k,m}$ are as in Proposition \ref{prop:relnew}. Observe that 
\[\zeta \left (y_{k}y_{m}-\sum_{i=1}^{n}\zeta _{i,k,m}y_{i} \right)={\bar {q}_{k}}{\bar {q}_{m}}-\sum_{i=1}^{n}\zeta _{i,k,m}{\bar {q}_{i}}+\langle J\rangle =0+\langle J\rangle .\] 
 Therefore $J''\subseteq {\tilde J}$.

  We next show that $\mathbb C\langle y_{1}, \ldots , y_{n } \rangle'/J''$ has an identity element which is of the form $\sum_{i=1}^{n}{\alpha }_{i}y_{i}+J''$. By Proposition \ref{8new} the algebra $\mathcal N$ has an identity element $1_{\mathcal N}= { y}_{j} + J'$. Recall from Proposition \ref{8new} that ${\mathcal N}=\mathbb C\langle y_{1}, \ldots , y_{n }\rangle' /J'$, where $\mathbb C\langle y_{1}, \ldots , y_{n }\rangle' $ is a non-unital algebra (as in our current proposition). 
  Since $1_{\mathcal N}$ satisfies the axioms of the identity element, it follows that the elements ${ y}_{i}y_{j}-y_{i}$ and $y_{j}y_{i}-y_{i}$ belong to $J'$. Recall that $J'$ is generated by relations  $y_{k}y_{m}-\sum_{i=1}^{n}(\zeta _{i,k,m}+t\xi_{i,k,m}(t))y_{i},$ for all $k,m\leq n$. 
 By comparing elements at $t^{0}$, we obtain that 
   ${ y}_{i}y_{j}-y_{i}$ and $y_{j}{ y}_{i}-y_{i}$ belong to the ideal generated by  $y_{k}y_{m}-\sum_{i=1}^{n}\zeta _{i,k,m}y_{i}$ in ${\mathbb C}\langle y_{1}, \ldots , y_{n}\rangle'$, so  they belong to $J''$. It follows that 
 ${ y}_{j}+J''$ is the identity element in $\mathbb C\langle y_{1}, \ldots , y_{n } \rangle'/J''$ (since the identity element is unique in any ring). 
Since $J''\subseteq {\tilde J}$, it follows that 
 ${y}_{j}+ {\tilde J}$ is the identity element in $\mathbb C\langle y_{1}, \ldots , y_{n } \rangle'/{\tilde J}$ (since it satisfies the axioms of the identity element).
 
 Therefore, $c=\zeta ({ y}_{j})$ satisfies the axioms of the identity element in $\operatorname {Im} (\zeta )$.  We have shown that $\zeta $ is onto $N$, and so \[c={\bar q}_{j}+\langle J\rangle \] is the identity element in ${\mathbb C}\langle x_{1}, \ldots , x_{\rho }\rangle/\langle J\rangle =N$. 
Recall that $\zeta $ is a homomorphism of non-unital algebras, so 
$\mathbb C\langle y_1,\dots, y_n \rangle'/{\tilde J}$ is isomorphic as a non-unital ${\mathbb C}$-algebra to 
$ Im (\zeta )=N$. Since an isomorphism of non-unital algebras preserves the unit  axioms when a unit exists, 
$\mathbb C\langle y_1,\dots, y_n \rangle'/{\tilde J}$ is also  isomorphic as a unital ${\mathbb C}$-algebra to 
$ Im (\zeta )=N$. 

Recall that  $J''\subseteq {\tilde J}$. It follows that  the dimension of $ \mathbb C\langle y_1,\dots, y_n \rangle'/{\tilde J}$ does not exceed the dimension of $ \mathbb C\langle y_1,\dots, y_n \rangle'/J''$. 
 
 We now show that ${\tilde J}=J''$. Notice that $\mathbb C\langle y_1,\dots, y_n \rangle'/J''$ is at most $n$-dimensional 
since every element can be presented as a linear combination of elements $y_{i}+J''$ for $i=1,2, \ldots, n$ (since we assumed that  $\mathbb C\langle y_1,\dots, y_n \rangle'$ is a non-unital algebra). On the other hand 
$\mathbb C\langle y_1,\dots, y_n \rangle'/{\tilde J}$ is isomorphic to 
 $\mathbb C\langle x_{1}, \ldots , x_{\rho} \rangle/\langle J \rangle $, and hence is $n$-dimensional (by Proposition \ref{3newest}). Hence, by comparing dimensions, we get  that ${\tilde J}=J''$, as required. This concludes the proof that $N$ is isomorphic to $R$. 
 
To obtain the  presentation of $N$ using elements $d_{i}$ (subject to relations $d_{k}d_{m}=\sum_{i=1}^{n}\zeta _{i,k,m}d_{i}$)
observe that if we denote $y_{i}+J'':=d_{i}$ in $R$ then we obtain such a presentation of $N$  using generators and relations. 
Then $d_{1}, \ldots , d_{n}$ generate a $\mathbb C$-algebra isomorphic to $N$. Moreover, $d_{j}$ is the identity element in this algebra, since it satisfies the axioms of the identity element,  
   because $y_{j}+J''$ is the identity element in $\mathbb C\langle y_{1}, \ldots , y_{n }\rangle' / J''$. 
\end{proof}

\subsection{How to construct  $N$ satisfying prescribed relations}\label{pr} 
 In this section we show that $N$ is isomorphic to $Im(f)/t\cdot Im(f)$. 
 
\begin{lemma}\label{condition1} 
 Let notation be as in Proposition \ref{7new}. 
 Let $J'=\{s\in {\mathbb C}\langle x_{1}, \dots , x_{\rho }\rangle : f(s)\in t\cdot {\operatorname {Im} (f)}\}$. 
 Then  ${\mathbb C}\langle x_{1}, \ldots , x_{\rho }\rangle/ J'$ is isomorphic to $N$. 
\end{lemma}
\begin{proof} Observe first that $J'$ is a two-sided ideal in ${\mathbb C}\langle x_{1}, \ldots , x_{\rho }\rangle$ since 
 if $s\in J'$ and $r,r'\in {\mathbb C}\langle x_{1}, \ldots , x_{\rho }\rangle$ then $f(rsr')=f(r)f(s)f(r')\in t\cdot {\operatorname {Im} (f)}$, so $rsr'\in J'$. Closure of $J'$ under sums and scalar multiplication is immediate.  

Let $d_{1}, \ldots , d_{n}$ be as in Proposition \ref{7new}, and denote the multiplication there by  $\ast _{0}$. By Proposition \ref{7new}, $N$ can be presented as the $\mathbb C$-algebra  generated by  $d_{1}, \ldots , d_{n}$ with multiplication  $\ast _{0}$
 given by $d_{k}\ast_{0}d_{m}=\sum_{i=1}^{n}\zeta _{i,k,m}d_{i},$
 where $\zeta _{i,k,m}$ are such that 
$f(q_{k})f(q_{m})=\sum_{i=1}^{n}(\zeta _{i,k,m}+t\xi _{i,k,m}(t))f(q_{i})$.

 Therefore, it suffices  to show that ${\mathbb C}\langle x_{1}, \ldots , x_{\rho }\rangle/ J'$ is isomorphic to this algebra.

Consider the  map  $\beta:  {\mathbb C}\langle x_{1}, \ldots , x_{\rho }\rangle \rightarrow N$ such that for $p\in   {\mathbb C}\langle x_{1}, \ldots , x_{\rho }\rangle $, \[\beta (p)=\sum_{i=1}^{n}{\alpha }_{i}d_{i}\]  if and only if 
 \[p-\sum_{i=1}^{n}{\alpha }_{i}{ q}_{i}\in I+t{\mathbb C}\langle x_{1}, \ldots , x_{\rho }\rangle \{t\} .\] 
 This is equivalent to the property that 
 \[f(p-\sum_{i=1}^{n}{\alpha }_{i}{ q}_{i})\in t\cdot \operatorname {Im} (f).\]
Observe that  such $\alpha _{i}\in \mathbb C$ exist by Proposition \ref{basisnew}.
 Indeed, write $p-\sum_{i=1}^{n}a_{i}(t)q_{i}\in I$ with $a_{i}(t)\in {\mathbb C}\{t\}$; put $\alpha _{i}=a_{i}(0)$. This gives $p-\sum_{i=1}^{n}\alpha _{i}q_{i}\in I+t\cdot {\mathbb C}\langle x_{1}, \ldots , x_{\rho }\rangle \{t\}$.

 Observe that such $\alpha _{i}\in \mathbb C$ are unique, since if $f(p-\sum_{i=1}^{n}{\alpha }_{i}{ q}_{i})\in t\cdot \operatorname {Im} (f)$ and $f(p-\sum_{i=1}^{n}{\alpha }_{i}'{ q}_{i})\in t\cdot \operatorname {Im} (f)$ then $\sum_{i=1}^{n}(\alpha _{i}-\alpha _{i}')f(q_{i})\in t\cdot \operatorname {Im} (f)$. Recall that $\operatorname {Im} (f)=\sum_{i=1}^{n}{\mathbb C}\{t\}f(q_{i})$. This implies that $\alpha _{i}-\alpha _{i}'\in {t\mathbb C}\{t\}$ (since by Proposition \ref{basesnew}  $f(q_{1}), \ldots , f(q_{n})$ are linearly independent over ${\mathbb C}\{t\}$). 
 Since $\alpha _{i}- \alpha _{i}'\in {\mathbb C}\cap t{\mathbb C}\{t\}$, we have $\alpha _{i}=\alpha _{i}'$.

 Observe that this map is onto, as $\beta ({\bar q}_{i})=d_{i}$.
  We will now describe the kernel of $\beta $. Let $c$ be such that $\beta (c) = 0$. Recall that $\beta (c)=\sum_{i}\alpha _{i}d_{i}$. Since the $d_{i}$ are linearly independent, $\beta (c)=0$ if and only if all $\alpha _{i}=0$, equivalently $f(c)\in  t\cdot \operatorname {Im} (f)$.

 We will now show that $\beta $ is a homomorphism of $\mathbb C$-algebras. 
 Observe that for $p, r\in  {\mathbb C}\langle x_{1}, \ldots , x_{\rho }\rangle $ if
 \[f(p)=\sum_{i=1}^{n}{\alpha }_{i}f(q_{i})+i_{1}, f(r)=\sum_{i=1}^{n}{\alpha }_{i}'f(q_{i})+ i_{2}\] for some $i_{1}, i_{2}\in t\cdot \operatorname {Im} (f)$ then
 \[\beta (p)=\sum_{i=1}^{n}{\alpha }_{i}d_{i}, \beta(r)=\sum_{i=1}^{n}{\alpha }_{i}'d_{i},\]
 hence, 
 $\beta (p+r)=\beta (p)+\beta (r)$, $\beta (p-r)=\beta (p)-\beta (r)$, $\beta (0)=0$.
 Observe also that  
\[f(pr)=f(\sum_{i=1}^{n}\sum_{j=1}^{n}\alpha _{i}\alpha _{j}'q_{i}q_{j})+i_{3}\] for some $i_{3}\in  t\cdot \operatorname {Im} (f).$  
 By Proposition \ref{prop:relnew}, 
\[f(pr)=\sum_{i=1}^{n}\sum_{j=1}^{n}\sum_{l=1}^{n}\alpha _{i}\alpha _{j}'\zeta _{l,i,j}f(q_{l})+i_{3}.\]
 
Therefore \[\beta (pr)=\sum_{i=1}^{n}\sum_{j=1}^{n}\sum_{l=1}^{n}\alpha _{i}\alpha _{j}'\zeta _{l,i,j}d_{l}.\]

 Observe, on the other hand, that
\[\beta (p)\ast _{0}\beta (r)=\sum_{i=1}^{n}\sum_{j=1}^{n}\alpha _{i}\alpha _{j}'d_{i}\ast _{0}d_{j},\] by the formula \[d_{k}\ast_{0}d_{m}=\sum_{i=1}^{n}\zeta _{i,k,m}d_{i}.\]  
 By the First Isomorphism Theorem  for $\mathbb C$-algebras, $\operatorname {Im} (\beta )$ is isomorphic to the algebra 
${\mathbb C}\langle x_{1}, \ldots , x_{\rho }\rangle /\ker(\beta ).$ We know that $\operatorname {Im} (\beta )$ is isomorphic to 
 $N$ (because it contains $d_{1}, \ldots , d_{n}$ and is a subalgebra of the algebra generated by $d_{1}, \ldots , d_{n}$ which is isomorphic to $N$). 
It remains to show that $c\in \ker (\beta )$ if and only if $f(c)\in t\cdot \operatorname {Im} (f)$ (so $\ker (\beta)=J'$).

Note that $\beta (c)=0$ if and only if  $f(c)\in t\cdot \operatorname {Im} (f)$. 
This concludes the proof. 
\end{proof}

\begin{lemma}\label{condition2}
   Let $m$ be a natural number. Let $s_{1}, \ldots, s_{m}\in {\mathbb C}\langle x_{1}, \ldots , x_{\rho }\rangle $ be such that $f(s_{i})\in t\cdot \operatorname {Im} (f)$.
 Let $S$ be the ideal generated by $s_{1}, \ldots , s_{m}$ in  ${\mathbb C}\langle x_{1}, \ldots , x_{\rho }\rangle $.   
 Suppose that the dimension of the algebra 
 $ {\mathbb C}\langle x_{1}, \ldots , x_{\rho }\rangle /S$ is at most $n$. Then 
 $ {\mathbb C}\langle x_{1}, \ldots , x_{\rho }\rangle /S$ is isomorphic to $N$. 
\end{lemma}
\begin{proof}
 Recall that $N$ has dimension $n$. 
 Let $J'$ be as in Lemma \ref{condition1}. Then $s_{1}, \ldots , s_{m}\in J'$ since $f(s_{i})\in t\cdot \operatorname {Im} (f)$. So $S\subseteq J'$. Since $S \subseteq  J'$, 
there is a surjection
${\mathbb C}\langle x_{1}, \ldots , x_{\rho }\rangle /S \rightarrow  {\mathbb C}\langle x_{1}, \ldots , x_{\rho }\rangle /J'$. 
 The target has dimension $n$, while the source has
dimension at most $n$; hence
both have dimension $n$, and
the map is an isomorphism.  
 By Lemma \ref{condition1} $ {\mathbb C}\langle x_{1}, \ldots , x_{\rho }\rangle /S$
 is isomorphic to $N$.  
\end{proof}

\begin{proposition}\label{imp} Let notation be as in
 Lemma \ref{condition1}. Then $N$ is isomorphic as a  $\mathbb C$-algebra to the quotient algebra $ \operatorname {Im} (f)/t\cdot \operatorname {Im} (f)$.
\end{proposition}
\begin{proof} It suffices to prove that 
${\mathbb C}\langle x_{1}, \ldots , x_{\rho }\rangle/ J'$ is isomorphic to $\operatorname {Im} (f)/t\cdot \operatorname {Im} (f)$. 
  Consider the homomorphism $g: {\mathbb C}\langle x_{1}, \ldots , x_{\rho }\rangle\rightarrow \operatorname {Im} (f)/t\cdot \operatorname {Im} (f)$ of $\mathbb C$-algebras given by $g(r)=f(r)+t\cdot \operatorname {Im} (f)$ for $r\in {\mathbb C}\langle x_{1}, \ldots , x_{\rho }\rangle$. 
By the First Isomorphism Theorem for algebras $\operatorname {Im} (g)$ is isomorphic to ${\mathbb C}\langle x_{1}, \ldots , x_{\rho }\rangle/\ker(g)$. We will now show that $ker(g)=J'$. 
 Let $r\in J'$. Then $f(r)\in t \cdot \operatorname {Im} (f)$, so $r\in \ker (g)$. 
  Suppose that $r\in \ker (g)$. Then $f(r)+t\cdot \operatorname {Im} (f)=g(r)=0$ so $f(r)\in t\cdot \operatorname {Im} (f)$; hence $r\in J'$. It remains to show that $g$ is onto. By Proposition \ref{basisnew}, $\operatorname {Im} (f)= \sum_{i=1}^{n}f(q_{i}){\mathbb C}\{t\}$, so $\operatorname {Im} (f)/t\cdot \operatorname {Im} (f)= \sum_{i=1}^{n}{\mathbb C}(f(q_{i})+t\cdot \operatorname {Im} (f))$. Note that $f(q_{i})+t\cdot \operatorname {Im} (f)=f({\bar q}_{i})+f({\tilde q}_{i})+ t\cdot \operatorname {Im} (f)=f({\bar q}_{i})+ t\cdot \operatorname {Im} (f)=g({\bar q}_{i})\in  \operatorname {Im} (g)$. Therefore, $g$ is onto. 
\end{proof}
\section{Defining  a formal deformation of $N$}\label{Z}

We now define a multiplication on $N$ which gives a formal deformation of $N$. 

\begin{theorem} \label{thm formal deformationnew}
Let $d_1,\dots, d_n$ be the basis elements from Proposition \ref{7new} and suppose
 $\zeta _{i,k,m}\in \mathbb C$, $\xi _{i,k,m}(t)\in {\mathbb C}\{t\}$ are as in Proposition \ref{prop:relnew}. Then the multiplication rule
\[d_{k} \ast_t d_{m}=\sum_{i=1}^{n}(\zeta _{i,k,m}d_{i}+t\xi_{i,k,m}(t)d_{i})\]
 gives a formal deformation such that $\ast_0$ gives the multiplication on $N$. 
 Moreover, the image of the identity element in $N$ is the identity element in the obtained deformation.
\end{theorem}
\begin{proof} Recall that $N$ is the $\mathbb C$-linear space with basis  $d_{1}, \ldots , d_{n}$ and associative $\mathbb C$-linear multiplication 
\[d_{k} d_{m}=\sum_{i=1}^{n}\zeta _{i,k,m}d_{i}.\] Moreover $N=d_{1}{\mathbb C}+\ldots +d_{n}{\mathbb C}$. 
 Define the multiplication $\ast _{t}$  on the set $d_{1}{\mathbb C}\{t\}+\ldots +d_{n}{\mathbb C}\{t\}$, as follows: 
\[d_{k} \ast_t d_{m}=\sum_{i=1}^{n}(\zeta _{i,k,m}+t\xi_{i,k,m}(t))d_{i}.\]
 By Proposition \ref{8new} the same structure constants define an associative 
  ${\mathbb C}\{t\}$-algebra on $\oplus _{i=1}^{n}{\mathbb C}\{t\}d_{i}$; reducing the constants modulo $t$  gives
 $d_{k} d_{m}=\sum_{i=1}^{n}\zeta _{i,k,m}d_{i}$ which is exactly the multiplication on $N$ from Proposition \ref{7new}.

 The fact that the image of the identity element in $N$ is the identity element in the obtained deformation follows from the fact that, by Proposition \ref{basisnew} (3), $q_{j}=1$; hence $f(q_{j})=1_{A'}$ and  $f(q_{j})f(q_{i})=f(q_{i})=f(q_{i})f(q_{j})$ for each $i\leq n$. This implies that $d_{i}\ast _{t}d_{j}=d_{i}=d_{j}\ast_{t}d_{i}$ (since the multiplication of $d_{k}\ast _{t}d_{m}$ uses the same constants $\xi _{i,k, m}(t), \zeta_{i,k, m}$ as the multiplication of $f(q_{k})f(q_{m})$). This implies that  
$d_{j}\ast _{0}d _{i}=d_{i}=d_{i}\ast _{0}d_{j}$ in $N$. 
 Since the identity element is unique in any ring, it follows that  $d_{j}$ is the identity in $N$ and its image is the identity in the obtained deformation algebra. This concludes the proof. 
\end{proof}

\section{Flat deformations to $A$}\label{Y}

 In this section we will show that $\mathcal N$ gives a flat deformation from $N$ to $A$.

\begin{theorem}\label{thm:iso1new}  
Let notation and assumptions be as in Subsection \ref{333}.  Let  $\mathcal N$ be a 
$\mathbb C\{t\}$-linear space  with  a basis $d_{1},\dots, d_n$   
 and with ${\mathbb C}\{t\}$-linear  associative multiplication given by  \[ d_{k}\ast _{t}d_{m}=\sum_{i=1}^{n}(\zeta _{i,k,m}+t\xi_{i,k,m}(t))d_{i},\]
  for all $k, m\leq n$. 
 Then this multiplication rule 
 gives a formal deformation of the algebra $N$ which is a flat deformation to $A$.
\end{theorem}
\begin{proof} The deformation is formal: setting $t=0$ in the multiplication rules for $\ast _{t}$ we obtain the multiplication  $\ast _{0}$.
  By Proposition \ref{Im}  $\mathcal N\cong \operatorname {Im} (f)$. By Proposition \ref{imp}  $ N\cong \operatorname {Im} (f)/t\cdot \operatorname {Im} (f)$.

We now show that $\operatorname {Im} (f)$ satisfies Definition \ref{Deformation} (so the obtained deformation is flat). By Propositions \ref{basisnew} (1) and \ref{basesnew} (1), $\operatorname {Im} (f)$ is a  
free ${\mathbb C}\{t\}$-module with basis $f(q_i{})$. By Proposition 
\ref{imp} $ \operatorname {Im} (f)/t\cdot \operatorname {Im} (f)$ is isomorphic to $N$.  After inverting $t$, ${\operatorname {Im} (f)}[t^{-1}]= A((t))$ (by Assumption $2$ from Subsection \ref{333} and a dimension comparison).  
 Hence 
 ${\operatorname {Im} (f)} \otimes _{{\mathbb C}\{t\}}{\mathbb C}\{\{t\}\}\cong A((t)) \otimes _{{\mathbb C}((t))}{\mathbb C}\{\{t\}\} \cong   A\otimes _{\mathbb C} {\mathbb C}\{\{t\}\}$.

\end{proof} 
\begin{corollary}\label{pppp} Let $n$ be a natural number. Let $N, A$ be unital  $\mathbb C$-algebras of dimension $n$. 
 If there is a unital ${\mathbb C}\{t\}$-algebra homomorphism $f$ satisfying the assumptions from Subsection \ref{333}  and such that $\operatorname {Im} (f)/t\cdot \operatorname {Im} (f)$ is isomorphic to $N$ as a unital  $\mathbb C$-algebra, then there is a flat deformation from $N$ to $A$.
\end{corollary}

\section{Answer to a Question from  \cite{DDS}}\label{DS}

 In this section we answer Question 0.1 from \cite{DDS}.
$ $
 
 We will now show that if there exists a flat deformation from a finite-dimensional algebra $N$ to an 
algebra $A$, then  there exists a flat deformation from $N$ to $A$ obtained by the  method from Subsection \ref{333}. Recall that by Definition $2.4$ of  \cite{JJAS} a deformation is \emph{polynomially split} if  it is of polynomial
            type and the associated ${\mathbb C}[t]$-algebra ${\mathcal N}_{{\mathbb C}[t]}$ satisfies   ${\mathcal N }_{{\mathbb C}[t]}[t^{-1}]\cong  A[t^{\pm 1}]$ 
(note  that in our notation $ A[t^{\pm 1}]={\tilde A}$). Recall that ${\mathcal N}_{{\mathbb C}[t]}=N[t]$ as a set. Moreover,
 ${\mathcal N}_{{\mathbb C}[t]}$ is a free ${\mathbb C}[t]$-module with an isomorphism of algebras ${\mathcal N}_{{\mathbb C}[t]}/t\cdot {\mathcal N}_{{\mathbb C}[t]}\cong N$ (see Remark $2.3$ of \cite{JJAS}). 
 Since a deformation of polynomial type  is a formal deformation, there is 
 a basis of the  $\mathbb C$-algebra $N$ with multiplication $\cdot $ and addition $+ $, and a new associative multiplication 
 $\circ _{t}$ of  $N\{t\}$ such that
 $a\circ_{t} b=a\cdot b+ u(a,b)$ where $u(a,b)\in t\cdot N\{t\}.$ This gives an associated algebra $\mathcal N$ which as a set is equal to $N\{t\}$ and which has multiplication $\circ _{t}$ (and has the same addition as $N\{t\}$).  
 Also ${\mathcal N}$ is a free ${\mathbb C}\{t\}$-module. We also have the associated algebra 
 ${\mathcal N}[t^{-1}]$ which as a set is equal to $N((t))$ (the Laurent series over $N$ in the variable $t$) and which has multiplication $\circ _{t}$ (and has the same addition as $N((t))$).
  For a deformation of polynomial type we also have an associated algebra ${ {\mathcal N}_{{\mathbb C}[t]}}$ which as a set is equal to $N[t]$   and has  the same addition as $N[t]$ and has the multiplication $\circ _{t}$ (where $N[t]$ is the polynomial ring over $N$ in variable $t$).
 Moreover, ${ {\mathcal N}_{{\mathbb C}[t]}}$ is a free ${\mathbb C}[t]$-module. We also have an associated algebra ${ {\mathcal N}_{{\mathbb C}[t]}}[t^{-1}]$ which as a set is equal to $N[t, t^{-1}]$   and has  the same addition as $N[t, t^{-1}]$ and has the multiplication $\circ _{t}$ (where $N[t, t^{-1}]$ is the ring of Laurent polynomials).
  If the deformation is polynomially split we have (by \cite{JJAS}) that ${ {\mathcal N}_{{\mathbb C}[t]}}[t^{-1}]$ is isomorphic to 
 $A[t, t^{-1}]$.  By tensoring ${{\mathcal N}_{{\mathbb C}[t]}}[t^{-1}]$ with ${\mathbb C}\{t\}[t^{-1}]$ (over ${\mathcal C}[t, t^{-1}]$) and tensoring 
 $A[t, t^{-1}]$ with  ${\mathbb C}\{t\}[t^{-1}]$ (over ${\mathcal C}[t, t^{-1}]$), we obtain that ${\mathcal N}[t^{-1}]$ is isomorphic to $A((t))=A'$.  
  Let ${d}_{1}', \ldots , {d}_{n}'$ be a basis of $N$, and let $d_{1}'=1_{N}$. 
 Let $d_{i}$ be the image of ${d}_{i}'$ in $
{\mathcal N}_{{\mathbb C}[t]}\subseteq {\mathcal N}$. 
 By Definition \ref{Deformation}, the image of $1_N$ is
the identity of the deformation. The polynomially split
deformation obtained from \cite{JJAS} has the same
convention. Hence $d_{1}=1_{\mathcal N}$. Then,  
  as sets, $N={d}_{1}'{\mathbb C}+ \cdots +{d}_{n}'{\mathbb C}$,
 $ {\mathcal N}_{{\mathbb C}[t]}=d_{1}{\mathbb C}[t]+ \cdots +d_{n}{\mathbb C}[t]$, 
${\mathcal N}=d_{1}{\mathbb C}\{t\}+ \cdots +d_{n}{\mathbb C}\{t\}$, and  $d_{1}, \ldots , d_{n}$ are as above. As sets $N\subseteq {\mathcal N}$, and as ${\mathbb C}[t]$-algebras we have ${ {\mathcal N}_{{\mathbb C}[t]}}\subseteq {\mathcal N}$. 

\begin{theorem}\label{25} Let notation be as above in this section. Let $N, A$ be finite-dimensional unital $\mathbb C$-algebras of dimension $n$. Suppose that there is a flat deformation from $N$ to $A$ (in the sense of Definition \ref{Deformation}). Then there is a formal deformation of $N$ which is a 
 flat deformation from $N$ to $A$,  and is obtained by the  method from Subsection \ref{333}. 
\end{theorem}
\begin{proof} Suppose that we have a flat deformation from $N$ to $A$, where $N, A$ are finite-dimensional algebras. 
 By Theorem $2.20$ of \cite{JJAS},  specifically the
implication $ (1)\rightarrow  (3)$, if there is a flat deformation from $N$ to $A$ then there is a polynomially split  flat deformation  from $N$ to $A$ which we denote by $ {\mathcal N}_{{\mathbb C}[t]}$ (which as a set is equal to $N[t]$). 
  Observe that $ {\mathcal N}_{{\mathbb C}[t]}$ is polynomially split, so there exists an isomorphism $h:{  {\mathcal N}_{{\mathbb C}[t]}}[t^{-1}]\rightarrow A[t^{\pm 1}]$  of ${\mathbb C}[t, t^{-1}]$-algebras. We also have  $A[t^{\pm 1}]\subseteq A\{t\}[t^{-1}]$. Since $d_{1}=1_{\mathcal N}$ and $h$ is unital, $h(d_{1})=1_{A[t^{\pm 1}]}$. We also have  $A[t^{\pm 1}]\subseteq A\{t\}[t^{-1}]$ and in this natural embedding we have $1_{A[t^{\pm 1}]  }=1_{  A\{t\}[t^{-1}] }$, so  we have 
 $h(d_{1})=1_{A\{t\}[t^{-1}]}\in  A\{t\}[t^{-1}]$.
 Define a homomorphism  $f: {\mathbb C}\langle x_{1}, \ldots , x_{n-1}\rangle \{t\}\rightarrow A'$ of  unital ${\mathbb C}\{t\}$-algebras by 
\[f(1)=h(d_{1})=1_{A\{t\}[t^{-1}]}\] and  \[f(x_{i})=h(d_{i+1})\] for  $i=1, \ldots , n-1$.  Then $f$ is a well-defined  homomorphism of unital ${\mathbb C}\{t\}$-algebras (for $n=1$ we assume the convention ${\mathbb C}\{\emptyset \}\{t\}={\mathbb C}\{t\}$, in this case $A$ and $N$ are one-dimensional unital algebras so they are isomorphic, hence the results hold in this case; so we can assume that $n>1$). 

  We will now show that  $f$  satisfies the assumptions of Subsection \ref{333} for $\rho =n-1$.  We use the notation from the beginning of this section. 

 {\bf Verification of Assumption $1$.}  Notice that by our definition $f(1)=1_{ A\{t\} [ t^{-1}] }$ and 
 $f(x_{i})=h(d_{i+1})\in A[t, t^{-1}]$, so Assumption $1$ holds. 

{\bf Verification of Assumption $2$.}  Since $h$ is an isomorphism and $ {\mathcal N}_{{\mathbb C}[t]}=d_{1}{\mathbb C}[t]+ \cdots + d_{n}{\mathbb C}[t]$ we get
  \[A[t, t^{-1}]=h({{\mathcal N}_{{\mathbb C}[t]}}[t^{-1}])= h(d_{1}){\mathbb C }[t, t^{-1}]+\ldots + h(d_{n}){\mathbb C }[t, t^{-1}], \]  
so $A[t, t^{-1}]\subseteq {\operatorname {Im} (f) }[t^{-1}]$ (by ${\operatorname {Im} (f) }[t^{-1}]$ 
 we mean  the subset of $A\{t\}[t^{-1}]$ consisting of finite sums of elements of the form $rt^{i}$ where $i\in \mathbb Z$ and $r\in \operatorname {Im} (f)$). Note that the existence of $h$ implies that ${ {\mathcal N}_{{\mathbb C}[t]}}[t^{-1}]$ has an identity element $1_{ { {\mathcal N}_{{\mathbb C}[t]}}[t^{-1}]}$, since $A[t, t^{-1}]$ has an identity element $1_{A[t, t^{-1}]}$. 
 Since each element of ${{\mathcal N}_{{\mathbb C}[t]}}[t^{-1}]$  is a ${\mathbb C}[t, t^{-1}]$-linear combination of elements $d_{1}, \ldots , d_{n}$, then $1_{ {\mathcal N}_{{\mathbb C}[t] }}[t^{-1}]$ is also such a linear combination.  It follows that 
 $1_{A[t, t^{-1}]}$ is a  ${\mathbb C}[t, t^{-1}]$-linear combination of elements $h(d_{1}), \ldots , h(d_{n})$. 
 Now, $A[t, t^{-1}]\subseteq  \operatorname {Im} (f)[t^{-1}]$ implies $A'=A\{t\}[t^{-1}]\subseteq  \operatorname {Im} (f)[t^{-1}]$. 
 Since $\operatorname {Im} (f)\subseteq A'$, we get 
 $\operatorname {Im} (f) [t^{-1}]=A'$. So for every $r\in A'$ (including the case when  $r=1_{A'}$) there is $m(r)\geq 0$ such that \[r\cdot t^{m(r)}\in \operatorname {Im} (f).\] Recall that $A$ is an $n$-dimensional $\mathbb C$-algebra. Choose a basis  ${b}_{1}, \ldots , {b}_{n}$ of $A$. Then ${b}_{1}t^{\alpha }, \ldots , {b}_{n}t^{\alpha } $ are linearly independent over ${\mathbb C}\{t\}$. By the above,  choose  $\alpha $ such that  ${b}_{1}t^{\alpha }, \ldots , {b}_{n}t^{\alpha }\in \operatorname {Im} (f)$.
 It follows that there are $c_{i}'\in {\mathbb C}\langle x_{1}, \ldots , x_{\rho }\rangle \{t\}$ such that $f(c_{i}')={b}_{i}t^{\alpha }$.  It follows that $f(c_{1}'), \ldots , f(c_{n}')$ are linearly independent over ${\mathbb C}\{t\}$.  Since each $c_{i}'$ is a finite ${\mathbb C}\{t\}$-linear combination of words, the ${\mathbb C}\{t\}$-span of the
word images has rank $n$; therefore some $n$ word images are ${\mathbb C}\{t\}$-linearly
independent.
 Consequently,  there are  $c_{1}, \ldots , c_{n}\in {\mathbb C}\langle x_{1}, \ldots , x_{\rho }\rangle $ such that 
 $f(c_{1}), \ldots , f(c_{n})$ are linearly independent over ${\mathbb C}\{t\}$. 
Therefore, Assumption $2$ from Subsection \ref{333} holds. 

{\bf Verification of Assumption $3$.}  By the definition of $f$ we have that 
  $\operatorname {Im} (f)$ equals  the  ${\mathbb C}\{t\}$-subalgebra  of $A'$ generated by the elements 
$f(x_{i})=h(d_{i+1})$ (recall that $1_{A'}\in \operatorname {Im} (f)$).   
 Since $d_{i}\in {{\mathcal N}_{{\mathbb C}[t]}}$,  they satisfy 
 relations $d_{k}\circ _{t} d_{m}=\sum_{i=1}^{n} (\zeta _{i, k, m}+t\xi _{i, k, m}(t))d_{i}$  (where $\xi _{i, k, m}(t)$ are polynomials since the deformation is of polynomial type) so in $A'=A((t))$ we have relations 
\[h(d_{k}) h( d_{m})=\sum_{i=1}^{n}( \zeta _{i,k,m}+t\xi _{i, k, m}(t))h(d_{i}).\] 
 Therefore, since $h(d_{1})=1_{A\{t\}[t^{-1}]}$ and for $1<i\leq n$, $h(d_{i})=f(x_{i-1})\in \operatorname {Im} (f)$, we have  \[\operatorname {Im} (f)=\sum_{i=1}^{n}h(d_{i}){\mathbb C}\{t\},\]
  since any product of elements $h(d_{i})$ belongs to $\sum_{i=1}^{n}h(d_{i}){\mathbb C}\{t\}$ by the above relations.  Observe that there exists an integer $\gamma $ such that $h(d_{1}), \ldots , h(d_{n})\in t^{-\gamma }A\{t\}$.   By the above equations  a product of 
$h(d_{i})$’s can be reduced  to the ${\mathbb C}\{t\}$-span
of finitely many $h(d_{i})$’s. 
 Therefore, $\operatorname {Im} (f)\subseteq t^{-\gamma }A\{t\}$. Consequently, Assumption $3$ of Subsection \ref{333} holds. 

 By Proposition \ref{imp}
 it remains to show that \[\operatorname {Im} (f)/t\cdot \operatorname {Im} (f)\] is isomorphic to $N$.  
  By the proof that Assumption $3$ holds,  $\operatorname {Im} (f)={\mathbb C}\{t\}h(d_{1})+\ldots +{\mathbb C}\{t\}h(d_{n})$.

 Since $\mathcal N$ is a formal deformation of $N$, it follows that $\mathcal N/t\cdot {\mathcal N}$ is isomorphic to $N$. Since $N$ has dimension $n$ as a $\mathbb C$-algebra, \[\mathcal N=d_{1}{\mathbb C}\{t\}+\ldots +d_{n}{\mathbb C}\{t\},\] where $d_{i}$ denotes the image of $d_{i}'$ in
${\mathcal  N}_{{\mathbb C}[t]} \subseteq {\mathcal N}$. Since $d_{1}, \ldots ,  d_{n}$ 
form a ${\mathbb C}\{t\}$-basis of $\mathcal N$, their
residue classes $d_{1}+t\cdot  {\mathcal N}, \ldots , d_{n}+t\cdot  {\mathcal N}$
 form a $\mathbb C$-basis of $\mathcal N/t\cdot {\mathcal N}$. 
 Let  $g:  {\mathcal N}/t\cdot {\mathcal N}\rightarrow N$ be an  isomorphism of $\mathbb C$-algebras. It follows that $g(d_{1}+t{\mathcal N}), \ldots , g(d_{n}+t{\mathcal N})$ is a basis of $N$ as a linear space over $\mathbb C$.
  
Consider the ${\mathbb C}$-linear map $q: \operatorname {Im} (f) \rightarrow     N $ given for $\alpha _{i} (t)\in {\mathbb C}\{t\}$  by   
  \[q(\sum_{i=1}^{n} \alpha_{i} (t)\cdot h(d_{i}))=\sum_{i=1}^{n}\alpha _{i} (0)\cdot g(d_{i}+t{\mathcal N}).\]  

 Since $h$ is an isomorphism, it follows that $h(d_{1}), \ldots , h(d_{n})$ are linearly independent over ${\mathbb C}[t, t^{-1}]$. Let $a_{1}, \ldots , a_{n}$ be a ${\mathbb C}$-basis of $A$. Then there is a non-zero  polynomial $p(t)\in {\mathbb C}[t]$ such that $a_{1}p(t), \ldots , a_{n}p(t)\in h(d_{1}){\mathbb C}[t, t^{-1}]+\ldots +h(d_{n}){\mathbb C}[t, t^{-1}]$. Therefore, $a_{1}, \ldots , a_{n}\in h(d_{1}){\mathbb C}\{ t\}[t^{-1}]+\ldots +h(d_{n}){\mathbb C}\{t\}[ t^{-1}]$.
Therefore,    $h(d_{1}), \ldots , h(d_{n})$  span $A\{t\}[t^{-1}]$ as a ${\mathbb C}\{t\}[t^{-1}]$-vector space. Since $A\{t\}[t^{-1}]$ has dimension $n$ as a ${\mathbb C}\{t\}[t^{-1}]$-vector space (since $A$ has dimension $n$) it follows that    $h(d_{1}), \ldots , h(d_{n})$
are linearly independent over ${\mathbb C}\{t\}$. Therefore,  a presentation of an element of $Im(f)$ as a sum $\sum_{i=1}^{n}\alpha_{i} (t)\cdot h(d_{i})$ is unique, so $q$ is well-defined. 

 The map $q$ is additive. To prove that $q$ is a ${\mathbb C}$-algebra homomorphism, it remains to verify
multiplicativity.  Let \[a=\sum_{i=1}^{n}\alpha_{i} (t)\cdot h(d_{i}), b=\sum_{j=1}^{n}\alpha_{j}' (t)\cdot h(d_{j}),\] for some $\alpha _{i}(t), \alpha _{j}'(t)\in {\mathbb C}\{t\}$. 
 Then \[ab=\sum_{i=1}^{n}\sum_{j=1}^{n}\alpha_{i} (t)\alpha _{j}' (t)\cdot h(d_{i}\circ _{t}d_{j})=\sum_{i=1}^{n}\sum_{j=1}^{n}\sum_{l=1}^{n}\alpha_{i} (t)\alpha _{j}' (t)  (\zeta _{l,i,j}+t\xi _{l,i,j}(t))h(d_{l}).\]
 Therefore, \[q(ab)= \sum_{i=1}^{n}\sum_{j=1}^{n}\sum_{l=1}^{n}\alpha_{i} (0)\alpha _{j}' (0)\cdot  (\zeta _{l,i,j}+0\cdot \xi _{l,i,j}(0))g(d_{l}+t{\mathcal N}).\]

 On the other hand, 
\[q(a)q(b)= \sum_{i=1}^{n}\sum_{j=1}^{n}\alpha_{i} (0)\alpha _{j}'(0)  g(d_{i}\circ _{t} d_{j}+t\cdot {\mathcal N}).\]
 Recall that $g$ is a homomorphism of algebras, and $d_{i}\circ _{t}d_{j}-\sum_{l=1}^{n}\zeta _{l,i,j}d_{l}\in t\cdot {\mathcal N}$. 

 Therefore, \[q(a)q(b) =  \sum_{i=1}^{n}\sum_{j=1}^{n}\sum_{l=1}^{n}\alpha_{i} (0)\alpha _{j}' (0)  \zeta _{l,i,j}g(d_{l}+t{\mathcal N}) .\]

 Therefore $q(ab)=q(a)q(b)$, which proves that $q$ is a homomorphism. 
 
Observe that if $r=\sum_{i=1}^{n}\alpha_{i} (t)h(d_{i})$ is in the kernel of $q$ then 
\[0_{N}=q(r)=\sum_{i=1}^{n}\alpha_{i} (0)g(d_{i}+t{\mathcal N}),\] and since 
$g(d_{1}+t{\mathcal N}), \ldots , g(d_{n}+t{\mathcal N})$ is a basis of $N$ as a linear space over $\mathbb C$, $\alpha _{i}(0)=0$, so $\alpha _{i}(t)\in t\mathbb C\{t\}$ for each $i$. Consequently, $r$ is in the kernel of $q$ if and only if $r\in t( {\mathbb C}\{t\}h(d_{1})+\ldots + {\mathbb C}\{t\}h(d_{n})) =t\cdot \operatorname {Im} (f)$. 
 Observe that $q$ is onto because the elements $g(d_{i}+tN)$  for $1 \leq  i \leq n$ form a $\mathbb C$-basis of $N$.

 It follows that 
$\operatorname {Im} (f)/ker (q)=\operatorname {Im} (f)/t\cdot \operatorname {Im} (f)$, and by the First Isomorphism Theorem for $\mathbb C$-algebras it is isomorphic to $ N$. Therefore,  $\operatorname {Im} (f)/t\cdot \operatorname {Im} (f)$ is isomorphic to $N$, as a $\mathbb C$-algebra.  
\end{proof}

 By Corollary \ref{pppp} and Theorem \ref{25} we obtain: 
\begin{corollary} \label{corollary} Let $n$ be a natural number. Let $N, A$ be unital $\mathbb C$-algebras of dimension $n$.
 The following statements are equivalent:
\begin{itemize}
\item There is a flat deformation from $N$ to $A$  (in the sense of Definition \ref{Deformation}).
\item  There exists a unital ${\mathbb C}\{t\}$-algebra  homomorphism $f$ satisfying  the assumptions from Subsection \ref{333} and such that $\operatorname {Im} (f)/t\cdot \operatorname {Im} (f)$ is isomorphic as a  unital $\mathbb C$-algebra to $N$. 
\end{itemize}
\end{corollary}

 \section{Generalizations}\label{generalisations}

We give a slight modification of the method from Subsection \ref{333}, in a special case where $A$ is semisimple. This method is easier to apply in practice.


 \subsection{A slight modification}\label{71} 

In this subsection we modify the method from Subsection \ref{333} to obtain another  method for constructing flat deformations from an algebra $N$ to a semisimple algebra $A$. 

 In a follow-up paper we will use this method  to construct deformations of some contraction algebras of type $E$. 
This method is easier to use in practice than the method from Subsection \ref{333}.

  By \[F={\mathbb C}\{t\}[t^{-1}]\] we denote the field of fractions of ${\mathbb C}\{t\}$ (the field of Laurent series). 
 Recall that ${\mathbb C}\{t\}$ is the ring of power series with coefficients from the field of complex numbers $\mathbb C$. 
 
    Let $k', n_{1}, \ldots , n_{k'}$ be positive integers. For each $i\leq k'$, we fix a polynomial  $g_{i}(x)\in {\mathbb C}[t][x]$ of degree $d_{i}>0$ ($g_{i}$ is a polynomial in the variable $x$ with coefficients in  ${\mathbb C}[t]\subseteq {F}$). We assume that the coefficient of the highest  power of $x$ in $g_{i}(x)$ is $1$.  We also fix $u_{i}=x+I_{i}\in F[x]/I_{i}$ where $I_{i}$ is the ideal generated by $g_{i}(x)$ in the polynomial ring $F[x]$. Observe that $g_{i}(u_{i})=0$  in $F[x]/I_{i}$.

 We assume  that each   $g_{i}(x)$ has no multiple roots in the algebraic closure of the field $\mathbb F$ (so   
 the  polynomial $g_{i}(x)$ and its derivative  $g_{i}'(x)$  have no nonconstant common divisors). We sometimes write $g_{i}(x)(t)$  to emphasize the dependence of its coefficients on $t$.

$ $
\begin{definition}\label{909}
 We denote \[A'=\oplus _{i=1}^{k'}M_{n_{i}}({\mathbb C}\{t\}[t^{-1}][u_{i}]),\]
 where ${\mathbb C}\{t\}[t^{-1}][u_{i}]$ consists of  finite sums of elements of the form  $c(t)u_{i}^{k}t^{j}$ for $k=0,1,\ldots $, 
$j\in {\mathbb Z}$, $c(t)\in {\mathbb C}\{t\}$. Similarly, ${\mathbb C}[t, t^{-1}][u_{i}]$ consists of  finite sums of terms   of the form $ cu_{i}^{k}t^{j}$ for $k=0,1,\ldots $, 
$j\in \mathbb Z$, where $c$ is in ${\mathbb C}$.
 We define  $M$ to be the following $\mathbb C$-linear subspace of $A'$:  \[{ M}=\oplus_{i=1}^{k'}M_{n_{i}}({\mathbb C}+{\mathbb C}u_{i}+\cdots +{\mathbb C}u_{i}^{d_{i}-1}).\] 
 Observe that $M[t, t^{-1}]$ is an algebra, where $M[t, t^{-1}]$ consists of finite sums of elements $mt^{i}$ for $m\in M$, $i\in \mathbb Z$ (closure of multiplication  follows because $g_i$  is monic with coefficients
in ${\mathbb C}[t]$, so every power $u_{i}^m$ reduces to a ${\mathbb C}[t]$-linear
combination of $1, u_{i},\ldots ,u_{i}^{d_{i-1}}$, and powers of $t$ are
allowed in $M[t,t^{-1}]$).  Denote ${\tilde A}=M[t, t^{-1}]$. 
 We denote $n=\sum_{i=1}^{k'}n_{i}^{2}d_{i}$. 
 We define $M\{t\}$ to be the following ${\mathbb C}\{t\}$-linear subspace of $A'$:  \[{ M}\{t\}=\oplus_{i=1}^{k'}M_{n_{i}}({\mathbb C}\{t\}+{\mathbb C}\{t\}u_{i}+\cdots +{\mathbb C}\{t\}u_{i}^{d_{i}-1}).\] 
 Denote  by $M\{t\}[ t^{-1}]$ the space consisting of finite sums of elements $mt^{i}$ for $m\in M\{t\}$, $i\in \mathbb Z.$ Observe that $A'=M\{t\}[t^{-1}]$. 
\end{definition}
\subsection { The method} \label{73}

 Let notation be as in Subsection \ref{71}. Let $\rho$ be a natural number and let  ${\mathbb C}\langle x_{1}, \ldots , x_{\rho }\rangle \{t\}$ be defined as in Notation \ref{4444}.
 We first describe the {\bf assumptions of our method}.

\begin{enumerate}
 \item  Let 
 $f: {\mathbb C}\langle x_{1}, \ldots , x_{\rho }\rangle \{t\} \rightarrow A'$  be a homomorphism of ${\mathbb C}\{t\}$-algebras such that \[f(x_{j})\in M[t, t^{-1}],\] for $j=1, \ldots , \rho$, 
 where $M[t, t^{-1}]$ is as in Definition \ref{909}. 

  We also have $f(1)=1_{A'}$, where $1_{A'}$ is the identity element of $A'$.
 
{\em Remark.}  {\em Notice that if  all $g_{i}(x)$ have degree one then all  $u_{i}\in {\mathbb C}[t]$ (so $u_{i}$ can be omitted) and  we would obtain the method from Subsection \ref{333}}. 

\item Denote \[{ A}=\oplus_{i=1}^{k'} M_{n_{i}}({\mathbb C})^{\oplus d_{i}}.\] 
 Then $A$ has dimension $n=\sum_{i=1}^{k'}n_{i}^{2}\cdot d_{i}.$

\item We suppose that there are $m_{1}, \ldots , m_{n}\in {\mathbb C}\langle x_{1}, \ldots , x_{\rho }\rangle $ such that 
$f(m_{1}), \ldots , f(m_{n})$ are linearly independent over $\mathbb C \{t\}$. 
\item We also assume that there is a natural number $\gamma $ such that
 \[\operatorname {Im} (f)\subseteq t^{-\gamma} M\{t\},\]
 where 
 $M\{t\}$ is as in Definition \ref{909}. 
 Recall that  $M$ is the following linear subspace of $A'$:  \[{ M}=\oplus_{i=1}^{k'}M_{n_{i}}({\mathbb C}+{\mathbb C}u_{i}+\cdots +{\mathbb C}u_{i}^{d_{i}-1}).\] 
\end{enumerate}
{\em If the above properties $1$--$4$ are satisfied, then there is a flat deformation from the algebra $\operatorname {Im} (f)/t\cdot \operatorname {Im} (f)$ to $A$ (see Corollary \ref{final})}.

\subsection{Validity of the method in the new notation }

   Observe first that  all the proofs from Sections \ref{A} to \ref{Y}
  work if we take $A, A', {\tilde A}, { M}$ as in Subsection \ref{73} (instead of taking them from Subsection \ref{333}).
 Recall that in Subsection \ref{333} we have $A((t))=A'$, $A[t,t^{-1}]={\tilde A}$, $A=M$.  So Sections \ref{A}-\ref{Y} carry over after replacing $A$ by $M$, replacing ${\tilde A}=A[t, t^{-1}]$ by ${\tilde A}=M[t, t^{-1}]$, and 
 replacing $A'=A((t))$ by $A'=M\{t\}[t^{-1}]$.

 Note that  Sections \ref{B}-\ref{Z}  use only properties of elements $q_{i}$, and symbols   
$A, A', {\tilde A}, { M} $ do not  appear in these sections (except that in Section \ref{6} the assumption $f(1)=1_{A'}$ is used). 
 
In the
present notation, Section \ref{A} is used without the assumption \[M=A, \tilde A=A[t,t^{-1}], A'=A((t)).\] The arguments of Section \ref{A} remain valid without the standing assumption
 $M=A, \tilde A=A[t,t^{-1}], A'=A((t))$, which appears  in the first line of Section \ref{A}.
 The structural properties which are being used are the following:
\begin{itemize}
\item  $M$ is an $n$-dimensional $\mathbb C$-vector space containing the identity of $A'$;
\item $M\{t\}$ is a free ${\mathbb C}\{t\}$-module of rank $n$;
\item $A'=M\{t\}[t^{-1}]$;
\item $M[t,t^{-1}]$ is an algebra containing the generator images;
\item $\operatorname {Im} (f)$ is bounded below in $t$-valuation.
\end{itemize}
 Therefore the leading-term construction in Proposition \ref{basisnew} and the unit argument still work, even though $M$ itself is only a linear subspace, not an algebra.

 In Section \ref{Y} the proof of Theorem \ref{thm:iso1new} is mostly the same. The only place that requires additional  work is the last line of that proof, namely the  fact that ${\operatorname {Im} (f)} \otimes _{{\mathbb C}\{t\}}{\mathbb C}\{\{t\}\}\cong   A\otimes _{\mathbb C} {\mathbb C}\{\{t\}\}$ (so $\operatorname {Im} (f)$ satisfies Definition \ref{Deformation}).  Observe first that ${\operatorname {Im} (f)}\otimes _{{\mathbb C}\{t\}}{\mathbb C}\{\{t\}\}\cong {\operatorname {Im} (f)}[t^{-1}]\otimes _{{\mathbb C}((t))}{\mathbb C}\{\{t\}\}=A'\otimes _{{\mathbb C}((t))}{\mathbb C}\{\{t\}\}=M\{t\}[t^{-1}]\otimes _{{\mathbb C}((t))}{\mathbb C}\{\{t\}\}\cong M\{t\}\otimes _{{\mathbb C}\{t\}}{\mathbb C}\{\{t\}\}$, since $\operatorname {Im} (f) [t^{-1}]=M\{t\}[t^{-1}]$ ($A'=M\{t\}[t^{-1}]$ and Assumption $4$ from Subsection \ref{73} gives containment after localization, and Assumption $3$ from Subsection \ref{73} gives full dimension).  This follows from the fact that $u_{i}$ is a root of the monic polynomial $g_{i}(x)\in {\mathbb C}\{t\}[x]$ which has no multiple roots, and so $({\mathbb C}\{t\}+{\mathbb C}\{t\}u_{i}+\cdots +{\mathbb C}\{t\}u_{i}^{d_{i}-1})\otimes _{{\mathbb C}\{t\}}{\mathbb C}\{\{t\}\}\cong {\mathbb C}\{\{t\}\}^{\oplus d_{i}}$. Therefore, $M_{n_{i}}({\mathbb C}\{t\}+{\mathbb C}\{t\}u_{i}+\cdots +{\mathbb C}\{t\}u_{i}^{d_{i}-1})\otimes _{{\mathbb C}\{t\}}{\mathbb C}\{\{t\}\}\cong M_{n_{i}}({\mathbb C}\{\{t\}\})^{\oplus d_{i}}.$  Therefore, $M\{t\}\otimes _{{\mathbb C}\{t\}}{\mathbb C}\{\{t\}\}\cong A\otimes _{\mathbb C}{\mathbb C}\{\{t\}\}$. 
 We now give a  proof that \[({\mathbb C}\{t\}+{\mathbb C}\{t\}u_{i}+\cdots +{\mathbb C}\{t\}u_{i}^{d_{i}-1})\otimes _{{\mathbb C}\{t\}}{\mathbb C}\{\{t\}\}\cong {\mathbb C}\{\{t\}\}^{\oplus d_{i}}.\]
 Observe that $({\mathbb C}\{t\}+{\mathbb C}\{t\}u_{i}+\cdots +{\mathbb C}\{t\}u_{i}^{d_{i}-1})\cong {\mathbb C}\{t\}[x]/\langle g_{i}(x)\rangle $, and 
\[ {\mathbb C}\{t\}[x]/\langle g_{i}(x)\rangle\otimes  _{{\mathbb C}\{t\}}{\mathbb C}\{\{t\}\}\cong {\mathbb C}\{\{t\}\}[x]/\langle g_{i}(x)\rangle \cong {\mathbb C}\{\{t\}\}^{\oplus d_{i}},\]
where the last isomorphism is the Chinese remainder theorem, since $g_{i}$ has $d_{i}$ distinct roots in
the algebraically closed field ${\mathbb C}\{\{t\}\}$.

As a corollary we obtain:
\begin{corollary}\label{final}Let $n$ be a natural number. Let $N, A$ be unital $\mathbb C$-algebras of dimension $n$, with $A$ semisimple. Choose a
Wedderburn decomposition $A\cong \oplus_{i=1}^{k'} M_{n_{i}}({\mathbb C})^{\oplus d_{i}}$ where $d_{i}, n_{i}, k'$ are such that  $n=\sum_{i=1}^{k'}n_{i}^{2}\cdot d_{i}.$

 If there is a unital ${\mathbb C}\{t\}$-algebra homomorphism $f$ satisfying the  assumptions from Subsection \ref{73}  and such that $Im(f)/t\cdot Im(f)$ is isomorphic to $N$ as a $\mathbb C$-algebra, then there is a flat deformation from $N$ to $A$.
\end{corollary}

\subsection{More examples}
 In this section we give an example illustrating the generalized method.

{\bf Example.} Let $r\geq 2$ and put 
$ F={\mathbb C}\{t\}[t^{-1}].$ 
 Consider the polynomial 
\[g(X)=X^{r}-t\in F[X],\]
and let 
\[B=F[X]/\langle X^{r}-t\rangle, u=X+\langle X^{r}-t\rangle\in B.\]
 Define a ${\mathbb C}\{t\}$-algebra homomorphism
\[f: {\mathbb C}\langle x\rangle \{t\} \longrightarrow B, f(x)=u.\]
 Then, $u^{r}=t.$
 Hence the image of $f$ is 
$\operatorname {Im} (f)={\mathbb C}\{t\}[u]={\mathbb C}\{t\}\cdot 1+ {\mathbb C}\{t\}\cdot u+\cdots +
{\mathbb C}\{t\}\cdot u^{r-1}.$
This is a free ${\mathbb C}\{t\}$-module of rank $r$. Therefore, the assumptions of Subsection \ref{73} are satisfied. We recall Lemma \ref{condition2}, which also holds in our generalized setting and is  applied here with $\rho =1$, since we have one generator $x$. Let $m$ be a natural number. Let $s_{1}, \ldots, s_{m}\in {\mathbb C}\langle x_{1}, \ldots , x_{\rho }\rangle $ be such that $f(s_{i})\in t\cdot \operatorname {Im} (f)$.
 Let $S$ be the ideal generated by $s_{1}, \ldots , s_{m}$ in  ${\mathbb C}\langle x_{1}, \ldots , x_{\rho }\rangle $.   
 Suppose that the dimension of the algebra 
 $ {\mathbb C}\langle x_{1}, \ldots , x_{\rho }\rangle /S$ is at most $n$. Then 
 $ {\mathbb C}\langle x_{1}, \ldots , x_{\rho }\rangle /S$ is isomorphic to $N$.

  In our example, since there is one
generator, ${\mathbb C}\langle x\rangle = {\mathbb C}[x]$.
  Recall that $n$ is the dimension of $M$ and in our case  $M={\mathbb C}\cdot 1+ {\mathbb C}\cdot u+\cdots +
{\mathbb C}\cdot u^{r-1}$ so $n=r$.  
 Let $s_{1}=x^{r}$. Then \[f(s_{1})=u^{r}=t=t\cdot f(1),\] so \[f(s_{1})\in t\cdot \operatorname {Im} (f).\] Let $S$ be the ideal generated by $s_{1}$; then 
 $ {\mathbb C}\langle x\rangle /S$ has dimension not exceeding $r$.
By Lemma  \ref{condition2} (applied to our generalized construction), 
 $ {\mathbb C}\langle x\rangle /S$ is isomorphic to $N$. 
 Hence our method gives  a flat deformation from 
 $ {\mathbb C}\langle x\rangle /S= {\mathbb C}\langle x\rangle /\langle x^{r}\rangle $ to ${\mathbb C}^{\oplus r}$. 
  Some other applications will be illustrated in a follow-up paper for some  contraction algebras  of type $E$.

\end{document}